\documentclass[11pt,a4paper]{article}
\usepackage[latin1]{inputenc}
\usepackage[T1]{fontenc}
\usepackage[american]{babel}      % language
\usepackage{amsmath,amssymb,amsthm}
\usepackage{graphicx}
\usepackage{authblk}
%%%%%%%
\pagestyle{myheadings}
\linespread{1.1} \textwidth=400pt \textheight=575pt \headsep=65pt
\hoffset=25pt
\frenchspacing
%%%%%%%

%%    
%%
%%                         August 23, 2014.
%%    
%%       file:  " REV-Multi-Order-Kanto-JIEA-2014.tex "
%%
%% Old name:"Multiv-Order-Kantorovich-lin-2014.tex "
%% 
%% REVISED VERSION submitted to: " J. Integral Eq. Appl. ";
%%   
%%                     On August ..., 2014.
%%

%%%----------------DICHIARAZIONE OPERATORI MATEMATICI----------

\newcommand{\mau}{\geq}
\newcommand{\miu}{\leq}

\newcommand{\N}{\mathbb{N}}
\newcommand{\R}{\mathbb{R}}

\newcommand{\Z}{\mathbb{Z}}

\newcommand{\uu}{\underline{u}}

\newcommand{\disp}{\displaystyle}

\newcommand{\xx}{\underline{x}}

\newcommand{\kk}{\underline{k}}
\newcommand{\tk}{t_{\underline{k}}}
\newcommand{\Rkw}{R^w_{\underline{k}}}
\newcommand{\Ak}{A_{\underline{k}}}

\newcommand{\eps}{\epsilon}
\newcommand{\ttt}{\underline{t}}

\newtheorem{definition}{Definition}[section]
\newtheorem{remark}[definition]{Remark}
\newtheorem{theorem}[definition]{Theorem}
\newtheorem{lemma}[definition]{Lemma}

\newtheorem{corollary}[definition]{Corollary}

%\date{}

%===============================================
\makeatletter
    \let\@fnsymbol\@arabic
    \makeatother
\title{Rate of approximation for multivariate sampling Kantorovich operators on some functions spaces}
\author[1]{{\bf Danilo Costarelli}}
\author[2]{{\bf Gianluca Vinti}}
\affil[1]{\small{Dipartimento di Matematica e Fisica, Sezione di Matematica, Universit\`{a} degli Studi  Roma Tre, 1, Largo S. Leonardo Murialdo, 00146 Rome, Italy, {\tt danilo.costarelli@gmail.com}}}
\affil[2]{\small{Dipartimento di Matematica e Informatica, Universit\`{a} degli Studi di Perugia,
\vskip0.001cm 1, Via Vanvitelli, 06123, Perugia, Italy, {\tt gianluca.vinti@unipg.it}}}
 
\date{}

\begin{document} 
\maketitle

\abstract{%
In this paper, the problem of the order of approximation for the multivariate
sampling Kantorovich operators is studied. The cases of
the uniform approximation for uniformly continuous and bounded functions/signals belonging to Lipschitz classes and the case of the modular approximation for
functions in Orlicz spaces are considered. In the latter context, Lipschitz classes of Zygmund-type which take into account of the modular functional involved are introduced. Applications to $L^p(\R^n)$, interpolation and exponential spaces can be deduced from the general theory formulated in the setting of Orlicz spaces. The special cases of multivariate sampling Kantorovich operators based on kernels of the product type and constructed by means of Fej\'er's and B-spline kernels have been studied in details.
\vskip0.3cm
\noindent
{\footnotesize Key words and phrases: Multivariate sampling Kantorovich operators, Orlicz spaces, order of approximation, Lipschitz classes, irregular sampling.}
\vskip0.3cm
\noindent
{\footnotesize AMS 2010 Mathematics Subject Classification: 41A25, 41A30, 46E30, 47A58, 47B38, 94A12}
}

\section{Introduction}

The sampling Kantorovich operators have been introduced to approximate and reconstruct not necessarily continuous signals.
The multivariate sampling Kantorovich operators considered in this paper (\cite{COVI1}) are of the form:
$$
(S_w f)(\underline{x})\ :=\ \sum_{\underline{k} \in \Z^n} \chi(w\underline{x}-t_{\underline{k}})\left[\frac{w^n}{A_{\underline{k}}} \int_{R_{\underline{k}}^w}f(\underline{u})\ d\underline{u}\right], \hskip0.7cm (\underline{x} \in \R^n), \hskip0.8cm \mbox{(I)}
$$
where $f: \R^n \to \R$ is a locally integrable function such that the above series is convergent for every $\underline{x} \in \R^n$. The symbol $t_{\underline{k}}=\left(t_{k_1},...,t_{k_n}\right)$ denotes vectors where each $(t_{k_i})_{k_i \in \Z}$, $i=1,...,n$ is a certain strictly increasing sequence of real numbers with $\Delta_{k_i}=t_{k_{i+1}}-t_{k_i}>0$.
Note that the sequences $(t_{k_i})_{k_i \in \Z}$ are not necessary equally spaced (irregular sampling scheme). We denote by $R_{\underline{k}}^w$ the sets:
$$
R_{\underline{k}}^w\ :=\ \left[\frac{t_{k_1}}{w},\frac{t_{k_1+1}}{w}\right]\times\left[\frac{t_{k_2}}{w},\frac{t_{k_2+1}}{w}\right]\times...\times\left[\frac{t_{k_n}}{w},\frac{t_{k_n+1}}{w}\right], \hskip2.2cm \mbox{(II)}
$$
$w>0$ and $A_{\underline{k}} = \Delta_{k_1} \cdot \Delta_{k_2} \cdot...\cdot \Delta_{k_n}$, $\underline{k} \in \Z^n$. Moreover, the function $\chi:\R^n \to \R$ is a kernel satisfying suitable assumptions. 
In \cite{BABUSTVI}, the authors introduced these operators in the univariate form, starting from the well-known generalized sampling operators (see e.g. \cite{RIST,BURIST2,BUFIST,BUST2,CV1,BAVI2,VI0,VI1,BAVI7,BABUSTVI0,BABUSTVI2}) and replacing, in their definition, the sample values $f(k/w)$ with $w \int_{k/w}^{(k+1)/w}f(u)\,du$. Clearly, this is the most natural mathematical modification to obtain operators which can be well-defined also for general measurable, locally integrable functions, not necessarily continuous. Moreover, this situation very often occurs in Signal Processing, when one cannot match exactly the sample at the point $k/w$: this represents the so-called "time-jitter'' error. The theory of sampling Kantorovich operators allow us to reduces the time-jitter error, calculating the information in a neighborhood of $k/w$ rather that exactly in the node $k/w$. These operators, as the generalized sampling operators, represent an approximate version of classical sampling series, based on the Whittaker-Kotelnikov-Shannon sampling theorem (see e.g. \cite{JER,DODSI,BUST1,BEKA,HIG2,HIST,ANVI}).

  Subsequently, the sampling Kantorovich operators have been studied in various settings. In \cite{VIZA1,COVI2} the nonlinear univariate and multivariate versions of these operators were introduced; applications to image processing have been discussed in \cite{COVI1,COVI2}. Indeed, static gray scale images are characterized by jumps of gray levels mainly concentrated in their contours or edges and this can be translated, from a mathematical point of view, by discontinuities. For these reasons, multivariate sampling Kantorovich operators appear very appropriate for applications to image reconstruction.  Moreover, some preliminary applications to civil engineering have been presented in \cite{CLCOMIVI1,CLCOMIVI3}. Results concerning the order of approximation have been obtained in \cite{COVI3,COVI4} in the univariate case, for the linear and nonlinear versions of these operators. Extensions of the theory to more general contexts were presented in \cite{DOVI,VIZA2,BAMA2,VEVI1,VIZA3}.

   In this paper, we study the problem of the rate of approximation for the multivariate
sampling Kantorovich operators in various settings. More precisely, we consider the case of
the uniform approximation for uniformly continuous and bounded functions belonging to Lipschitz classes and the case of the modular approximation for
functions in Orlicz spaces $L^{\varphi}(\R^n)$. In this context, we will introduce Lipschitz classes of
Zygmund-type which take into account of the modular functional involved. From the results concerning Orlicz spaces, applications to $L^p(\R^n)$ spaces, $1 \miu p < +\infty$, as to other examples of Orlicz spaces can be deduced. In particular, the application of the multivariate sampling Kantorovich operators to $L^p$-spaces is suitable for Signal/Image Processing. Other important cases of Orlicz spaces here considered are the interpolation spaces $L^{\alpha}\log^{\beta}L(\R^n)$ and the exponential spaces, which are very used for applications, e.g., to partial differential equations and for embedding theorems between Sobolev spaces respectively. 

   In order to obtain results concerning the order of approximation for the multivariate sampling Kantorovich operators starting from the one-dimensional theory, some difficulties arise. First of all, the definition of the Lipschitz class in which we work must be extended to the multivariate case, both in case of uniformly continuous functions and for functions in Orlicz spaces. But the main difficulty is related to the construction of multivariate kernels which satisfy all the assumptions of the above theory (see Section \ref{applications}). Indeed, it turns out that the kernels affect the rate of approximations when uniform and modular approximation are considered. For this reason we introduce a procedure useful to construct multivariate kernel and to determine their parameters $\mu$, $\beta$ and $\alpha$, starting from one-dimensional kernels.
The special cases of multivariate kernels of the product type, based upon Fej\'er's and B-spline kernels have been studied in details.

%%%%%%%%%%%%%%%%%%%%%%%%%

\section{Preliminary notions} \label{sec2}

In this paper, we will denote by $C(\R^n)$ the set of all uniformly continuous and bounded functions $f: \R^n \rightarrow \R$ endowed with the usual sup-norm $\|\cdot\|_{\infty}$. Moreover, by $\| \cdot \|_2$ we will denote the usual Euclidean norm in $\R^n$, i.e., $\|\xx\|_2 := (x_1^2 + ... + x_n^2)^{1/2}$, $\xx = (x_1,...,x_n) \in \R^n$. 

  In order to study the rate of approximation of a family of linear operators, we introduce the definition of the Zygmund-type class (Lipschitz class) for functions of several variables.

  We define the class $Lip_{\infty}(\nu)$, $0<\nu \miu 1$, as
$$
Lip_{\infty}(\nu)\ :=\ \left\{f \in C(\R^n):\ \left\|f(\cdot)-f(\cdot + \ttt)\right\|_{\infty} =\mathcal{O}(\|\ttt\|_2^{\nu}),\ \mbox{as}\ \|\ttt\|_2 \rightarrow0 \right\},
$$
where for any two functions $f$, $g: \R^n\rightarrow\R$, $f(\ttt)=\mathcal{O}(g(\ttt))$ as $\|\ttt\|_2\rightarrow0$ means that there exist constants $C$, $\gamma>0$ such that $\left|f(\ttt)\right| \miu C\left|g(\ttt)\right|$ for every $\ttt \in \R^n$, with $\|\ttt\|_2 \miu \gamma$ (\cite{BAMUVI,VI1}). The above definition represents the extension to the multivariate setting of the Zygmund-type classes introduced in \cite{COVI3} for univariate functions. It is easy to observe that, in case of functions defined on bounded domains, the above definition is equivalent to the well-known definition of $\nu$-Holder continuous functions.

The sampling Kantorovich operators $S_w$ studied in this paper are suitable to reconstruct not necessarily continuous signals (see, e.g., \cite{BABUSTVI,COVI1}), and a very general class of spaces containing such functions are the so-called Orlicz spaces. Since in the rest of the paper we will study the above operators also in this general setting, we now
recall some basic notions concerning Orlicz spaces.

  A function $\varphi: \R^+_0 \rightarrow \R^+_0$ is said to be a $\varphi$-function if it satisfies the following conditions:\\
$(\Phi 1)$ $\varphi$ is a non decreasing and continuous function;\\
$(\Phi 2)$ $\varphi(0)=0$, $\varphi(u)>0$ if $u >0$ and $\lim_{u \rightarrow +\infty} \varphi(u)=+\infty$.\\
Let us now consider the functional $I^{\varphi}$ associated to any given $\varphi$-function $\varphi$ and defined as follows
$$
I^{\varphi}[f]\ :=\ \int_{\R^n}\varphi(|f(\xx)|)\ d\xx,
$$
for every $f \in M(\R^n)$, i.e., for every (Lebesgue) measurable function $f: \R^n \rightarrow \R$. As it is well-known, the functional $I^{\varphi}$ satisfy a number of important properties. For instance, $I^{\varphi}$ is a modular functional (see e.g. \cite{BAMUVI,MU1,RAO1}), and moreover, if the $\varphi$-function $\varphi$ is convex, the corresponding modular functional is convex too.

   Now, we are able to recall the definition of the Orlicz space $L^{\varphi}(\R^n)$ generated by $\varphi$. We define
$$
L^{\varphi}(\R^n)\ :=\ \left\{ f \in M(\R^n)\ :\ I^{\varphi}[\lambda f] < +\infty,\ \mbox{for some}\ \lambda>0\right\}.
$$
A notion of convergence in Orlicz spaces, called {\em modular convergence}, was introduced in \cite{MUORL}, which induces a topology in $L^{\varphi}(\R^n)$, called {\em modular topology}.

  A family $(f_w)_{w>0} \subset L^{\varphi}(\R^n)$ is said to be modularly convergent to $f \in L^{\varphi}(\R^n)$, if there exists $\lambda>0$ such that
\begin{equation} \label{def_mod_convergence}
I^{\varphi}[\lambda(f_w-f)]\ =\ \int_{\R^n}\varphi(\lambda|f_w(\xx)-f(\xx)|)\ d\xx\ \longrightarrow 0, \hskip0.5cm as \hskip0.5cm w \rightarrow +\infty. 
\end{equation}
Moreover we recall, for the sake of completeness, that in $L^{\varphi}(\R^n)$ can be also given a stronger notion of convergence, i.e., the Luxemburg-norm convergence, see e.g. \cite{BAMUVI,MU1}. 

  We now define by $Lip_{\varphi}(\nu)$, $0<\nu \miu 1$, the Zygmung-type class in Orlicz spaces, as the set of all functions $f \in M(\R^n)$ such that there exists $\lambda>0$ with 
$$
I^{\varphi}[\lambda(f(\cdot)-f(\cdot+\ttt))]=\int_{\R^n}\varphi\left(\lambda\left|f\left(\xx\right)-f\left(\xx + \ttt\right)\right|\right) d\xx\ =\ \mathcal{O}(\|\ttt\|_2^{\nu}),
$$
as $\|\ttt\|_2 \to 0$.  The above definition extends that one given in \cite{COVI3} for functions of one variable. For further results concerning Orlicz spaces see \cite{BAMA,BAMUVI,BAVI_1,KOZ,KRA,MAL,MU1,RAO1,RAO2}.

%%%%%%%%%%%%%%%%%%%

\section{Multivariate sampling Kantorovich operators} \label{sec3}

In this section, the definition of the multivariate sampling Kantorovich operators is recalled (\cite{COVI1}), together with its main properties and some useful remarks.

  Let $\Pi^n =(\tk)_{\kk \in \Z^n}$ be a sequence of real numbers defined by $\tk = (t_{k_1}, ..., t_{k_n})$, where each $(t_{k_i})_{k_i \in \Z}$, $i=1,...,n$, is a sequence such that $-\infty < t_{k_i} < t_{k_i+1} < +\infty$ for every $k_i \in \Z$, $\lim_{k_i\rightarrow \pm \infty}\ t_{k_i}\ =\ \pm \infty$ and there are two positive constants $\Delta$, $\delta$ such that $\delta \miu \Delta_{k_i}:= t_{k_i+1}-t_{k_i} \miu \Delta$, for every $k_i \in \Z$.\\
In what follows, a function $\chi: \R^n\rightarrow\R$ will be called a kernel if it satisfies the following properties:
\begin{itemize}
	\item ($\chi 1$) $\chi \in L^1(\R^n)$ and is bounded in a neighborhood of the origin;
	\item ($\chi 2$) For some $\mu >0$,
$$
\sum_{\kk \in \Z^n}\chi(w\xx-\tk)-1\ =:\ A_w(\xx)-1 =\  \mathcal{O}(w^{-\mu}),\hskip0.5cm as\ \hskip0.5cm w \to +\infty,
$$
uniformly with respect to $\xx \in \R^n$;
	\item ($\chi 3$) For some $\beta>0$, we assume that the discrete absolute moment of order $\beta$ is finite, i.e.,
	$$
	m_{\beta,\Pi^n}(\chi)\ :=\ \sup_{\uu \in \R} \sum_{\kk \in \Z^n}\left|\chi(\uu-\tk)\right|\cdot\|\uu-\tk\|^{\beta}\ <\ +\infty;
$$
   \item ($\chi 4$) There exists $\alpha>0$ such that, for every $M>0$,
$$ 
\int_{\| \uu \|_2 > M} w^n \left| \chi(w\, \uu)\right|\ d\uu\ =\ \mathcal{O}(w^{-\alpha}), \hskip0.5cm as \hskip0.5cm w \rightarrow +\infty.
$$
\end{itemize}
The conditions listed above are the typical properties satisfied by the approximate identities, and are standard assumptions required in case of discrete linear operators.

 In order to recall the definition of sampling Kantorovich operators, we first introduce the following notation:
$$
R^w_{\kk}\, :=\, \left[ \frac{t_{k_1}}{w}, \frac{t_{k_1+1}}{w} \right] \times ... \times \left[ \frac{t_{k_n}}{w}, \frac{t_{k_n+1}}{w} \right]\ \subset \R^n,
$$
for every $\kk \in \Z^n$ and $w>0$. Denoting by $A_{\kk}:= \Delta_{k_1} \cdot ... \cdot \Delta_{k_n}$, the Lebesgue measure of $R^w_{\kk}$ is given by $A_{\kk}/w^n$. We now define by $(S_w)_{w>0}$ the family of the multivariate sampling Kantorovich operators defined by
\begin{equation} \label{kantorovich}
(S_w f)(\xx)\ :=\ \sum_{\kk \in \Z^n}\chi(w\xx-\tk)\left[\frac{w^n}{A_{\kk}}\, \int_{R^w_{\kk}}f(\uu)\ d\uu\right] \hskip1cm (\xx \in \R^n),
\end{equation}
where $f:\R^n \rightarrow\R$ is a locally integrable function such that the series is convergent for every $\xx \in \R^n$.

  We begin giving the proof of following lemma.
\begin{lemma} \label{lemma1}
Under the assumptions $(\chi 1)$ and $(\chi  3)$ on the kernel $\chi$, we have\\
(i) $\disp m_{0,\Pi^n}(\chi)\ :=\ \sup_{\uu \in \R^n} \sum_{\kk \in \Z^n}\left|\chi(\uu-\tk)\right|\ <\ +\infty$; \\
(ii) For every $\gamma>0$,
$$
\sum_{\|w\xx-\tk \|_2 >\gamma w}\left|\chi(w\xx-\tk)\right|\ =\ \mathcal{O}(w^{-\beta}), \hskip0.5cm as \hskip0.5cm w\rightarrow +\infty,
$$
uniformly with respect to $\xx \in \R^n$, where $\beta>0$ is the constant of condition $(\chi 3)$.
\end{lemma}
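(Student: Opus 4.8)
The plan is to prove both parts by partitioning the index set $\Zn$ according to the size of $\|\uu-\tk\|_2$ and trading the factor $\|\uu-\tk\|_2^{\beta}$ against the finite discrete absolute moment $m_{\beta,\Pi^n}(\chi)$ furnished by $(\chi 3)$. For (i), I would fix $\uu\in\Rn$ and use $(\chi 1)$ to pick constants $r_0>0$ and $C_0>0$ such that $|\chi(\vv)|\miu C_0$ whenever $\|\vv\|_2\miu r_0$. Writing $\Zn=J_1(\uu)\cup J_2(\uu)$ with $J_1(\uu):=\{\kk\in\Zn:\ \|\uu-\tk\|_2\miu r_0\}$ and $J_2(\uu)$ its complement, on $J_2(\uu)$ one has $\|\uu-\tk\|_2^{\beta}>r_0^{\beta}$, hence $|\chi(\uu-\tk)|< r_0^{-\beta}\,|\chi(\uu-\tk)|\,\|\uu-\tk\|_2^{\beta}$; summing over $J_2(\uu)$ and invoking $(\chi 3)$ bounds that part by $r_0^{-\beta}\,m_{\beta,\Pi^n}(\chi)$, uniformly in $\uu$. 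For $J_1(\uu)$ the point is that $\#J_1(\uu)$ is bounded independently of $\uu$: if $\|\uu-\tk\|_2\miu r_0$ then $|u_i-t_{k_i}|\miu r_0$ for each $i=1,\dots,n$, and since consecutive nodes of $(t_{k_i})_{k_i\in\Z}$ are at distance $\mau\delta$, the interval $[u_i-r_0,u_i+r_0]$ contains at most $\lceil 2r_0/\delta\rceil+1$ of them; thus $\#J_1(\uu)\miu N:=(\lceil 2r_0/\delta\rceil+1)^n$ and $\sum_{\kk\in J_1(\uu)}|\chi(\uu-\tk)|\miu N\,C_0$. Taking the supremum over $\uu\in\Rn$ then gives $\mo\miu N\,C_0+r_0^{-\beta}\,m_{\beta,\Pi^n}(\chi)<+\infty$.

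For (ii), I would fix $\gamma>0$ and $w>0$ and apply $(\chi 3)$ with $\uu$ replaced by $w\xx$. Since every $\kk$ in the range of summation satisfies $\|w\xx-\tk\|_2>\gamma w$, we have $|\chi(w\xx-\tk)|<(\gamma w)^{-\beta}\,|\chi(w\xx-\tk)|\,\|w\xx-\tk\|_2^{\beta}$, whence
$$
\sum_{\|w\xx-\tk\|_2>\gamma w}|\chi(w\xx-\tk)|\ \miu\ \frac{1}{(\gamma w)^{\beta}}\sum_{\kk\in\Zn}|\chi(w\xx-\tk)|\,\|w\xx-\tk\|_2^{\beta}\ \miu\ \frac{m_{\beta,\Pi^n}(\chi)}{\gamma^{\beta}}\,w^{-\beta},
$$
which is $\mathcal{O}(w^{-\beta})$ as $w\to+\infty$, uniformly with respect to $\xx\in\Rn$, as required.

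The only step that is not a routine split-and-dominate estimate is the counting in (i): one must exploit the lower mesh bound $\delta$ to control, uniformly in the center $\uu$, the number of nodes $\tk$ falling into a ball of the fixed radius $r_0$. This separate handling of $J_1(\uu)$ is genuinely necessary because $(\chi 1)$ only provides boundedness of $\chi$ in a neighborhood of the origin; if $\chi$ were bounded on all of $\Rn$, part (i) would be immediate from $(\chi 3)$ alone, and part (ii) would be unaffected.
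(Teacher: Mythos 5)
Your proof is correct. Part (ii) coincides with the paper's own argument: on the range of summation you trade the constraint $\|w\xx-\tk\|_2>\gamma w$ for the factor $(\gamma w)^{-\beta}$ and then invoke $(\chi 3)$, which gives the bound $m_{\beta,\Pi^n}(\chi)\,\gamma^{-\beta}w^{-\beta}$ uniformly in $\xx$. For part (i) the paper gives no proof at all (it simply refers to \cite{COVI1}); your near/far splitting --- bounding the number of nodes $\tk$ with $\|\uu-\tk\|_2\le r_0$ by $(\lceil 2r_0/\delta\rceil+1)^n$ via the lower mesh bound $\delta$ and the boundedness of $\chi$ near the origin from $(\chi 1)$, and controlling the far nodes by $r_0^{-\beta}m_{\beta,\Pi^n}(\chi)$ --- is the standard argument and correctly supplies the step the paper outsources.
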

\begin{proof}
For a proof of (i) see e.g. \cite{COVI1}.\\
(ii) Let $\gamma >0$ be fixed. For every $\xx \in \R^n$ and $w>0$ we obtain
\begin{eqnarray*}
\sum_{\|w\xx-\tk \|_2 >\gamma w}\left|\chi(w\xx-\tk)\right|\ &\miu&\ \frac{1}{\gamma^{\beta}w^{\beta}}\sum_{\|w\xx-\tk \|_2 >\gamma w}\left|\chi(w\xx-\tk)\right|\cdot \|w\xx-\tk\|_2^{\beta}\ \miu\ \\
\\
&\miu&\ \frac{1}{\gamma^{\beta}w^{\beta}}\, m_{\beta,\Pi^n}(\chi)\ <\ +\infty,
\end{eqnarray*}
and so the assertion follows.\\
\end{proof}
\begin{remark} \label{osservazione1} \rm
In case of $f \in L^{\infty}(\R^n)$, by Lemma \ref{lemma1} (i), $S_w f$ are well-defined for every $w>0$. Indeed,
\begin{displaymath}
\left|(S_w f)(\xx)\right|\ \miu\ m_{0, \Pi^n}(\chi)\left\|f\right\|_{\infty}\ < +\infty,
\end{displaymath}
for every $\xx \in \R^n$ and $w>0$, i.e., $S_w : L^{\infty}(\R^n)\rightarrow L^{\infty}(\R^n)$.
\end{remark}

\section{Order of approximation in $C(\R^n)$} \label{sec4}

We now begin by studying the rate of approximation for the family of linear, multivariate sampling Kantorovich operators (\ref{kantorovich}) in $C(\R^n)$. 
\begin{theorem} \label{theorem1}
Let $\chi$ be a kernel and $f \in Lip_{\infty}(\nu)$, $0<\nu \miu 1$. Then
$$
\left\|S_w f-f\right\|_{\infty}\ =\ \mathcal{O}(w^{-\eps}), \hskip0.5cm as \hskip0.5cm w\rightarrow +\infty,
$$
where $\eps := \min\left\{\nu,\ \beta,\ \mu \right\}$ and $\mu$, $\beta>0$ are the constants of conditions $(\chi 2)$ and $(\chi 3)$, respectively.
\end{theorem}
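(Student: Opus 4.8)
The plan is to split $S_w f - f$ into a ``sampling defect'' term and a term measuring the failure of $\sumkk\chi(w\xx-\tk)$ to equal $1$. Precisely, writing $A_w(\xx):=\sumkk\chi(w\xx-\tk)$, I would start from the identity
$$
(S_wf)(\xx)-f(\xx)=\sumkk\chi(w\xx-\tk)\left[\frac{w^n}{\Ak}\int_{\Rkw}\big(f(\uu)-f(\xx)\big)\,d\uu\right]+f(\xx)\big(A_w(\xx)-1\big),
$$
valid for every $\xx\in\R^n$ and $w>0$. By $(\chi2)$ and the boundedness of $f\in C(\R^n)$, the last summand is $\mathcal{O}(w^{-\mu})$ uniformly in $\xx$, so the whole work is to bound the first summand by $\mathcal{O}(w^{-\min\{\nu,\beta\}})$.

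Before doing so I would make two preliminary reductions. First, since $\|\ttt\|_2^{\min\{\nu,\beta\}}\mau\|\ttt\|_2^{\nu}$ for $\|\ttt\|_2\miu1$, one has $Lip_{\infty}(\nu)\subseteq Lip_{\infty}(\min\{\nu,\beta\})$, and as $\min\{\min\{\nu,\beta\},\mu\}=\eps$ it suffices to prove the statement with $\nu$ replaced by $\min\{\nu,\beta\}$; hence I may assume $0<\nu\miu\beta$. Under this assumption the discrete moment $m_{\nu,\Pi^n}(\chi)$ is finite: splitting the supremum defining it according to whether $\|\uu-\tk\|_2\miu1$ or $\ma1$ and using $\|\uu-\tk\|_2^{\nu}\miu1$ in the first case, $\|\uu-\tk\|_2^{\nu}\miu\|\uu-\tk\|_2^{\beta}$ in the second, gives $m_{\nu,\Pi^n}(\chi)\miu m_{0,\Pi^n}(\chi)+m_{\beta,\Pi^n}(\chi)<+\infty$ by Lemma \ref{lemma1}(i) and $(\chi3)$. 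Second, since $f$ is bounded, the condition defining $Lip_{\infty}(\nu)$ upgrades to a \emph{global} Lipschitz-type inequality: there is $C'>0$ with $|f(\xx)-f(\xx+\ttt)|\miu C'\|\ttt\|_2^{\nu}$ for all $\xx,\ttt\in\R^n$ (for large $\|\ttt\|_2$ this is just $2\|f\|_{\infty}\miu 2\|f\|_{\infty}\gamma^{-\nu}\|\ttt\|_2^{\nu}$).

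Now I would estimate the sampling defect. For $\uu\in\Rkw$ each coordinate of $\uu-\tk/w$ has modulus at most $\Delta/w$, so $\|\uu-\tk/w\|_2\miu w^{-1}\sqrt{n}\,\Delta$, whence $\|\uu-\xx\|_2\miu w^{-1}\big(\|w\xx-\tk\|_2+\sqrt{n}\,\Delta\big)$; combining the global bound with the subadditivity of $t\mapsto t^{\nu}$,
$$
|f(\uu)-f(\xx)|\ \miu\ C'w^{-\nu}\big(\|w\xx-\tk\|_2+\sqrt{n}\,\Delta\big)^{\nu}\ \miu\ C'w^{-\nu}\big(\|w\xx-\tk\|_2^{\nu}+(\sqrt{n}\,\Delta)^{\nu}\big),
$$
a bound independent of $\uu\in\Rkw$. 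Since $\tfrac{w^n}{\Ak}\int_{\Rkw}d\uu=1$, the first summand is dominated in modulus by
$$
C'w^{-\nu}\sumkk|\chi(w\xx-\tk)|\big(\|w\xx-\tk\|_2^{\nu}+(\sqrt{n}\,\Delta)^{\nu}\big)\ \miu\ C'w^{-\nu}\big(m_{\nu,\Pi^n}(\chi)+(\sqrt{n}\,\Delta)^{\nu}m_{0,\Pi^n}(\chi)\big),
$$
which is $\mathcal{O}(w^{-\nu})$ uniformly in $\xx$ by the finiteness of the two moments. Together with the $\mathcal{O}(w^{-\mu})$ bound for the other term this gives $\|S_wf-f\|_{\infty}=\mathcal{O}(w^{-\min\{\nu,\mu\}})=\mathcal{O}(w^{-\eps})$, the exponent $\eps$ arising because $\nu$ here stands for $\min\{\nu,\beta\}$ after the first reduction.

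The computation is essentially bookkeeping once the quantities are in place; the one point deserving care is the reduction to $\nu\miu\beta$, since the exponent $\beta$ of $(\chi3)$ does not enter through a separate tail estimate but precisely through the finiteness of $m_{\nu,\Pi^n}(\chi)$, which may fail when $\nu>\beta$. An alternative avoiding the reduction would be to split the $\kk$-sum into $\|w\xx-\tk\|_2\miu\gamma w$ and $\ma\gamma w$, bounding the far part via Lemma \ref{lemma1}(ii) and the boundedness of $f$; the reduction route is shorter and I would follow it.
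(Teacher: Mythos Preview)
Your proof is correct, and it takes a genuinely shorter route than the paper's. Both arguments start from the same identity
\[
(S_wf)(\xx)-f(\xx)=\sumkk\chi(w\xx-\tk)\,\frac{w^n}{\Ak}\int_{\Rkw}\big(f(\uu)-f(\xx)\big)\,d\uu+f(\xx)\big(A_w(\xx)-1\big)
\]
and dispose of the second summand via $(\chi2)$. The difference lies in the first summand. The paper keeps the Lipschitz bound \emph{local} (valid only for $\|\ttt\|_2\miu\gamma$), and is therefore forced to split the $\kk$-sum into a near part $\|w\xx-\tk\|_2\miu w\gamma/2$ and a far part; on the near part it carries out a careful computation of $\max_{\ttt\in(\Rkw-\xx)}\|\ttt\|_2^{\nu}$, while the far part is controlled by the tail estimate of Lemma \ref{lemma1}(ii), contributing a separate $\mathcal{O}(w^{-\beta})$ term. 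The paper also treats the cases $\nu\miu\beta$, $\beta\miu\nu\miu1$, and $\beta>1$ one after the other. You instead first reduce to $\nu\miu\beta$ (noting $\min\{\nu,\beta\}\miu1$, so subadditivity of $t\mapsto t^{\nu}$ is always available) and then observe that boundedness of $f$ upgrades the Lipschitz inequality to a \emph{global} one; with this in hand no near/far split is needed, and the only role left for $(\chi3)$ is the finiteness of $m_{\nu,\Pi^n}(\chi)$. The resulting bound is a two-line computation. What the paper's splitting buys is that it displays explicitly where each of the three rates $w^{-\nu}$, $w^{-\beta}$, $w^{-\mu}$ comes from; what your reduction buys is brevity and the avoidance of separate case distinctions. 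The ``alternative'' you mention at the end is in fact precisely the paper's approach.
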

\begin{proof}
First, we consider the case of $\chi$ satisfying condition $(\chi 3)$ for $0 < \beta \miu 1$.
\vskip0.1cm

  Let now $f \in Lip_{\infty}(\nu)$, $0<\nu \miu \beta$, be fixed. By Remark \ref{osservazione1}, $S_w f$ are well-defined for every $w>0$. Moreover, since $f \in Lip_{\infty}(\nu)$, we have
$$
\sup_{\xx \in \R^n}\left|f(\xx)-f(\xx+\ttt)\right|\ \miu\ C  \|\ttt\|_2^{\nu},
$$
for some constants $C$, $\gamma>0$, and
for every $\ttt \in \R^n$ such that $\| \ttt\|_2 \miu \gamma$. Let now $\xx \in \R^n$ be fixed. Then we can write
\begin{eqnarray*}
&& \left|(S_w f)(\xx)-f(\xx)\right|\ \miu \left|(S_w f)(\xx)-f(\xx)A_w(\xx)\right|\ +\ \left|f(\xx)A_w(\xx)-f(\xx)\right|\\
&& \miu\ \sum_{\kk \in \Z^n}\left|\chi(w\xx-\tk)\right|\frac{w^n} {A_{\kk}}\int_{R^w_{\kk}}\left|f(\uu)-f(\xx)\right|\ d\uu\ +\ |f(\xx)|\, |A_w(\xx)-1| \\
&\miu&\ \left(\sum_{\|w\xx-\tk\|_2 \miu w\gamma/2}+\sum_{\|w\xx-\tk\|_2 > w\gamma/2}\right)\left|\chi(w\xx-\tk)\right|\frac{w^n} {A_{\kk}}\int_{R^w_{\kk}} \left|f(\uu)-f(\xx)\right|\ d\uu \\
&+& \|f\|_{\infty} |A_w(\xx)-1| =:\ I_1\ +\ I_2\ +\ I_3.
\end{eqnarray*}
In order to estimate $I_1$ we now introduce the following notation. We denote by $(R^w_{\kk}-\xx)$ the sets of the form
$$
(R^w_{\kk}-\xx)\ :=\ \left[\frac{t_{k_1}}{w}-x_1,\, \frac{t_{k_1+1}}{w}-x_1\right] \times ...\times \left[\frac{t_{k_n}}{w}-x_n,\, \frac{t_{k_n+1}}{w}-x_n\right].
$$
We can observe that 
for every $\ttt \in (R^w_{\kk}-\xx)$, 
if $\|w\xx-\tk\|_2 \miu w \gamma/2$, we have
$$
\|\ttt\|_2\ \miu\ \|\ttt-\tk/w+\xx\|_2\, +\, \|\tk/w-\xx\|_2\ \miu\ \sqrt n\ \frac{\Delta}{w}\, +\, \frac{\gamma}{2}\ <\ \gamma,
$$
for sufficiently large $w>0$. Then by the change of variable $\uu=\xx+\ttt$ in the integrals of $I_1$, the above inequality and the definition of $Lip_{\infty}(\nu)$, we can obtain
\begin{eqnarray}
I_1\ &=&\ \sum_{\|w\xx-\tk\|_2 \miu w\gamma/2}\left|\chi(w\xx-\tk)\right|\, \frac{w^n}{A_{\kk}}\int_{(R^w_{\kk}-\xx)}\left|f(\xx+\ttt)-f(\xx)\right|\ d\ttt  \nonumber \\
\label{stima_prop} \\
&\miu&\ C \!\! \sum_{\|w\xx-\tk\|_2 \miu w\gamma/2}\left|\chi(w\xx-\tk)\right|\ \frac{w^n}{A_{\kk}}\int_{(R^w_{\kk}-\xx)} \|\ttt\|_2^{\nu}\ d\ttt \nonumber.   
\end{eqnarray}
In order to estimate (\ref{stima_prop}), we proceed as follows:
$$
\max_{\ttt \in (R^w_{\kk}-\xx)}\|\ttt\|_2^{\nu}\ \miu\ \left(   \max_{t_1 \in \left[\frac{t_{k_1}}{w}-x_1,\, \frac{t_{k_1+1}}{w}-x_1\right]} \!\! t^2_1\, +\, ... +\, \max_{t_n \in \left[\frac{t_{k_n}}{w}-x_n,\, \frac{t_{k_n+1}}{w}-x_n\right]} \!\! t^2_n \right)^{\nu/2}
$$
$$
\miu \left(  \max \left\{ \left( \frac{t_{k_1}}{w} -x_1 \right)^2, \left(\frac{t_{k_1+1}}{w}-x_1\right)^2\right\}\, + ...  +  \max \left\{ \left(\frac{t_{k_n}}{w}-x_n\right)^2, \left(\frac{t_{k_n+1}}{w}-x_n\right)^2 \right\}     \right)^{\nu/2}
$$
$$
\miu \frac{1}{w^{\nu}} \left(  \max \left\{ \left( wx_1- t_{k_1} \right)^2, \left(w x_{1}- t_{k_1+1}\right)^2\right\} + ... + \max \left\{ \left( wx_n- t_{k_n} \right)^2, \left(w x_{n}- t_{k_n+1}\right)^2\right\}   \right)^{\nu/2}
$$
$$
\miu \frac{1}{w^{\nu}} \left(  \max \left\{ \|w\xx- \tk \|_2^2, \|w \xx - \tk - \Delta_{\kk}\|_2^2\right\} + ... + \max \left\{ \| w\xx - \tk\|_2^2, \|w \xx - \tk - \Delta_{\kk}\|_2^2\right\}   \right)^{\nu/2},
$$
where $\Delta_{\kk}:=(\Delta_{k_1},\, ...,\, \Delta_{k_n})$. Now, recalling that each $\Delta_{k_i} \miu \Delta$, $i=1,...,n$, we can observe that
$$
\|w \xx - \tk - \Delta_{\kk}\|_2\ \miu \|w \xx - \tk\|_2\ +\ \sqrt{n}\ \Delta,
$$
then we finally obtain
$$
\max_{\ttt \in (R^w_{\kk}-\xx)}\|\ttt\|_2^{\nu}\ \miu\ w^{-\nu}n^{\nu/2} \left[\|w \xx - \tk\|_2\ +\ \sqrt{n}\ \Delta    \right]^{\nu}.
$$
Now, since $0 < \nu \miu \beta \miu 1$, we have that the function $x^{\nu}$ for $x \mau 0$ is concave, and then subadditive, so we can write
\begin{equation} \label{dis-important}
\max_{\ttt \in (R^w_{\kk}-\xx)}\|\ttt\|_2^{\nu}\ \miu\ w^{-\nu}\, n^{\nu/2}\ \left[\, \|w \xx - \tk\|^{\nu}_2\ +\  n^{\nu/2}\Delta^{\nu}\, \right].
\end{equation}
Then, by the inequalities in (\ref{stima_prop}) and (\ref{dis-important}) we obtain that
$$
I_1\ \miu\ w^{-\nu}\, n^{\nu/2}\, C \!\!\! \sum_{\|w\xx-\tk\|_2 \miu w\gamma/2}\left|\chi(w\xx-\tk)\right|\, \left[\, \|w \xx - \tk\|^{\nu}_2\ +\  n^{\nu/2}\Delta^{\nu}\, \right]
$$
$$
\miu\ w^{-\nu}\, n^{\nu/2}\, C  \left[  \sum_{\|w\xx-\tk\|_2 \miu w\gamma/2}\left|\chi(w\xx-\tk)\right|\|w \xx - \tk\|^{\nu}_2\ +\   n^{\nu/2}\Delta^{\nu}\, m_{0, \Pi^n}(\chi)  \right]
$$
$$
\hskip-4.5cm \miu\ w^{-\nu}\, n^{\nu/2}\, C  \left[  m_{\nu, \Pi^n}(\chi)\ +\   n^{\nu/2}\Delta^{\nu}\, m_{0, \Pi^n}(\chi)  \right].
$$
Now, by condition $(\chi 3)$ we have $m_{\beta, \Pi^n}(\chi) < +\infty$ and this implies that $m_{\nu, \Pi^n}(\chi) < +\infty$, for every $0 < \nu \miu \beta$; moreover by Lemma \ref{lemma1} (i) it turns out that $m_{0, \Pi^n}(\chi) < +\infty$, hence we can state 
$$
I_1\ =\ \mathcal{O}(w^{-\nu}),\ \hskip0.5cm as \hskip0.5cm w \to +\infty.
$$
Further, by Lemma \ref{lemma1} (ii),
$$
I_2\ \miu\ 2 \left\|f\right\|_{\infty} \sum_{\|w\xx-\tk\|_2 > w\gamma/2}\left|\chi(w\xx-\tk)\right|\ =\ \mathcal{O}(w^{-\beta}), \hskip0.5cm as \hskip0.5cm w\rightarrow +\infty,
$$
uniformly with respect to $\xx \in \R^n$, and finally from $(\chi 2)$
we obtain that $I_3 = \mathcal{O}(w^{-\mu})$, as $w \to +\infty$, uniformly with respect to $\xx \in \R^n$. Thus, we have shown that
$$
\left|(S_w f)(\xx)-f(\xx)\right|\ \miu\ I_1 + I_2 + I_3\ =\ \mathcal{O}(w^{-\nu}) + \mathcal{O}(w^{-\beta}) + \mathcal{O}(w^{-\mu}), \hskip0.5cm as \hskip0.5cm w\rightarrow +\infty,
$$
uniformly with respect to $\xx \in \R^n$ and therefore we finally obtain that
$$
\left\|S_w f-f\right\|_{\infty}\ =\ \mathcal{O}(w^{-\eps}), \hskip0.5cm as \hskip0.5cm w\rightarrow +\infty,
$$
where $\eps:=\min\left\{ \nu,\, \beta,\, \mu \right\}$.
\vskip0.2cm

  Let now $f \in Lip_{\infty}(\nu)$, with $\beta \miu \nu \miu 1$ be fixed. Since 
$Lip_{\infty}(\nu) \subseteq Lip_{\infty}(\beta)$, then for the previous case we can claim that 
$$
\left\|S_w f-f\right\|_{\infty}\ =\ \mathcal{O}(w^{-\eps}), \hskip0.5cm as \hskip0.5cm w\rightarrow +\infty,
$$
where $\eps:=\min\left\{ \beta,\, \mu \right\} = \min\left\{ \nu,\, \beta,\, \mu \right\}$, since $\nu \mau \beta$.
\vskip0.2cm
Finally, we consider $\chi$ satisfying condition $(\chi 3)$ for $\beta >1$. By the above considerations, $m_{\beta, \Pi}(\chi) < +\infty$ implies $m_{1,\Pi}(\chi)< +\infty$, which means that $\chi$ satisfies condition $(\chi 3)$ also for $\beta=1$, and so this case can be reduced to the previous step, and the assertion follows.
\end{proof}
%

%%%%%%%%%%%%%%%%%%%%%%%%%

\section{Order of approximation in Orlicz spaces $L^{\varphi}(\R^n)$} \label{orlicz}

  In order to study the behavior of the sampling Kantorovich operators when not necessary continuous signals (such as images) should be reconstructed, and to
study their degree of approximation, we consider the case of multivariate signal belonging to the general setting of Orlicz spaces $L^{\varphi}(\R^n)$, where $\varphi$ is a convex $\varphi$-function. We first recall the following modular continuity property for $S_w$.
\begin{theorem} \label{modular_cont}
Let $\chi$ be a kernel. For every $f \in L^{\varphi}(\R^n)$, there holds
$$
I^{\varphi}[\lambda S_w f]\ \miu\ \frac{\|\chi \|_1}{\delta^n m_{0,\Pi^n}(\chi)}I^{\varphi}[\lambda m_{0,\Pi^n}(\chi) f] \hskip1cm (\lambda>0),
$$
for every $w> 0$. In particular, $S_w f \in L^{\varphi}(\R^n)$ whenever $f \in L^{\varphi}(\R^n)$.
\end{theorem}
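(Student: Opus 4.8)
The plan is to derive the inequality from two successive applications of Jensen's inequality for the convex $\varphi$-function $\varphi$, followed by a tiling argument for the sets $R^w_{\kk}$. Throughout write $m_0 := \mo$, which is a finite positive constant by Lemma \ref{lemma1}(i) together with $(\chi 1)$. Since $\varphi$ is non-decreasing, it is enough to work with the nonnegative (possibly $+\infty$-valued) measurable majorant
$$
g(\xx)\ :=\ \sum_{\kk \in \Z^n}\left|\chi(w\xx-\tk)\right|\,\frac{w^n}{A_{\kk}}\int_{R^w_{\kk}}|f(\uu)|\,d\uu,
$$
which satisfies $|(S_wf)(\xx)| \miu g(\xx)$ at every point where the series defining $S_wf$ converges; once we bound $I^{\varphi}[\lambda g]$ we obtain, a posteriori, that $g$ (hence $S_wf$) is finite a.e.\ and that the defining series converges absolutely a.e.

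First I would apply Jensen to the discrete sum. Rewriting
$$
\lambda\,g(\xx)\ =\ \sum_{\kk \in \Z^n}\frac{|\chi(w\xx-\tk)|}{m_0}\left[\lambda m_0\,\frac{w^n}{A_{\kk}}\int_{R^w_{\kk}}|f(\uu)|\,d\uu\right],
$$
the coefficients $|\chi(w\xx-\tk)|/m_0$ are nonnegative and sum to at most $1$ by the definition of $m_0$; since $\varphi$ is convex with $\varphi(0)=0$, Jensen's inequality (filling the defect of mass with the value $0$) gives
$$
\varphi(\lambda\,g(\xx))\ \miu\ \sum_{\kk \in \Z^n}\frac{|\chi(w\xx-\tk)|}{m_0}\,\varphi\!\left(\lambda m_0\,\frac{w^n}{A_{\kk}}\int_{R^w_{\kk}}|f(\uu)|\,d\uu\right).
$$
Because the Lebesgue measure of $R^w_{\kk}$ equals $A_{\kk}/w^n$, the quantity $\frac{w^n}{A_{\kk}}\int_{R^w_{\kk}}|f(\uu)|\,d\uu$ is the mean of $|f|$ over $R^w_{\kk}$ with respect to normalized Lebesgue measure, so a second application of Jensen yields
$$
\varphi\!\left(\lambda m_0\,\frac{w^n}{A_{\kk}}\int_{R^w_{\kk}}|f(\uu)|\,d\uu\right)\ \miu\ \frac{w^n}{A_{\kk}}\int_{R^w_{\kk}}\varphi\bigl(\lambda m_0\,|f(\uu)|\bigr)\,d\uu.
$$

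Next I would integrate over $\xx \in \R^n$ and invoke Tonelli's theorem (all integrands nonnegative) to exchange sum and integral, using $\int_{\R^n}|\chi(w\xx-\tk)|\,d\xx = w^{-n}\|\chi\|_1$ after an affine change of variable. This gives
$$
I^{\varphi}[\lambda g]\ \miu\ \frac{\|\chi\|_1}{m_0}\sum_{\kk \in \Z^n}\frac{1}{A_{\kk}}\int_{R^w_{\kk}}\varphi\bigl(\lambda m_0\,|f(\uu)|\bigr)\,d\uu.
$$
To conclude I would use $A_{\kk}=\Delta_{k_1}\cdots\Delta_{k_n}\mau \delta^n$ and the fact that the boxes $R^w_{\kk}$, $\kk \in \Z^n$, cover $\R^n$ with pairwise intersections of Lebesgue measure zero, so that $\sum_{\kk}\int_{R^w_{\kk}}h = \int_{\R^n}h$ for every nonnegative measurable $h$. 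This produces $I^{\varphi}[\lambda g]\miu \frac{\|\chi\|_1}{\delta^n m_0}\,I^{\varphi}[\lambda m_0 f]$, and since $|S_wf|\miu g$ a.e.\ the stated estimate follows. For the final assertion, given $f \in L^{\varphi}(\R^n)$ pick $\lambda_0>0$ with $I^{\varphi}[\lambda_0 f]<+\infty$ and apply the inequality with $\lambda = \lambda_0/m_0$ to get $I^{\varphi}[\lambda S_wf]<+\infty$, i.e.\ $S_wf \in L^{\varphi}(\R^n)$.

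The \emph{only delicate point} is the bookkeeping around well-definedness: the Jensen and Tonelli steps must be carried out for the nonnegative majorant $g$ — which makes sense in $[0,+\infty]$ with no prior integrability hypothesis — and only afterwards does one transfer the bound to $S_wf$ via $|S_wf|\miu g$ and deduce a.e.\ convergence of the series; everything else is routine once the two Jensen inequalities and the tiling property of $\{R^w_{\kk}\}$ are in place.
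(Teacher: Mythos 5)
Your proof is correct, and it follows essentially the same route as the paper's: the paper defers the proof of this theorem to \cite{COVI1}, where the argument is exactly your combination of two applications of Jensen's inequality (discrete, with weights $|\chi(w\xx-\tk)|/m_{0,\Pi^n}(\chi)$, and continuous, over $R^w_{\kk}$), Fubini--Tonelli with the change of variable giving $\int_{\R^n}|\chi(w\xx-\tk)|\,d\xx = w^{-n}\|\chi\|_1$, the bound $A_{\kk}\mau\delta^n$, and the tiling of $\R^n$ by the boxes $R^w_{\kk}$ -- the same scheme the paper itself reuses to estimate $J_1$ in Theorem \ref{order_orlicz_spaces}. Your handling of well-definedness through the nonnegative majorant $g$ and the observation $|S_wf|\miu g$ is also the standard and correct way to justify the a.e.\ convergence of the series.
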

\noindent For a proof of Theorem \ref{modular_cont}, see \cite{COVI1}. The above theorem shows that the map $S_w: L^{\varphi}(\R^n) \to L^{\varphi}(\R^n)$ is well-defined and continuous with respect to the modular topology (\cite{MU1}).

  Now, we establish the following result which gives a rate of approximation for the sampling Kantorovich operators in Orlicz spaces.
\begin{theorem} \label{order_orlicz_spaces}
Let $\chi$ be a kernel and $f \in L^{\varphi}(\R^n) \cap Lip_{\varphi}(\nu)$, $0< \nu \miu 1$. Suppose in addition that there exist $\theta$, $\gamma >0$ such that
\begin{equation} \label{ip_chi_tau}
\int_{\|\ttt\|_2 \miu \gamma} w^n |\chi(w\ttt)|\ \|\ttt\|_2^{\nu}\ d\ttt\ =\ \mathcal{O}(w^{-\theta}), \hskip0.5cm as \hskip0.5cm w \to +\infty.
\end{equation}
Then there exists $\lambda >0$ such that
$$
I^{\varphi}[\lambda(S_wf - f)]\ =\ \mathcal{O}(w^{-\eps}), \hskip0.5cm as \hskip0.5cm w \to +\infty,
$$
with $\eps := \min\left\{\theta,\ \nu,\ \mu,\ \alpha\right\}$, where $\mu $, $\alpha>0$ are the constants of conditions $(\chi 2)$ and $(\chi4)$ respectively.
\end{theorem}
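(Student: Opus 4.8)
The plan is to run the scheme of the proof of Theorem~\ref{theorem1} in modular form, replacing every sup-norm estimate by a modular one and exploiting throughout the convexity of $\varphi$ (hence of $I^\varphi$) via Jensen's inequality. Since $f\in L^\varphi(\R^n)$ there is $\lambda_1>0$ with $\iphi[\lambda_1 f]<+\infty$; since $f\in Lip_{\varphi}(\nu)$ there are $\lambda_0$, $C_0$, $\gamma_1>0$ with $\iphi[\lambda_0(f(\cdot)-f(\cdot+\ttt))]\le C_0\|\ttt\|_2^\nu$ whenever $\|\ttt\|_2\le\gamma_1$; and $\mo<+\infty$ by Lemma~\ref{lemma1}(i). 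I would then fix once and for all $\lambda:=(4\mo)^{-1}\min\{\lambda_0,\lambda_1\}$; this is the $\lambda$ asserted in the statement, and with this choice every modular functional appearing below --- each of the form $\iphi[c\lambda\mo f]$ or $\iphi[c\lambda\mo(f(\cdot)-f(\cdot+\ttt))]$ with $c\le 4$ --- is finite and, in the second case, bounded by $C_0\|\ttt\|_2^\nu$ as soon as $\|\ttt\|_2\le\gamma_1$. Shrinking $\gamma_1$ if necessary, I also assume that the constant $\gamma$ of \eqref{ip_chi_tau} satisfies $\gamma\le\gamma_1$.

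Putting $A_w(\xx):=\sum_{\kk\in\Z^n}\chi(w\xx-\tk)$, I start from the decomposition $S_wf-f=(S_wf-f A_w)+f(A_w-1)$ and the convexity inequality
$$
\iphi[\lambda(S_wf-f)]\ \le\ \tfrac12\,\iphi[2\lambda(S_wf-f A_w)]\ +\ \tfrac12\,\iphi[2\lambda f(A_w-1)].
$$
The last term is immediate: by $(\chi 2)$, $|A_w(\xx)-1|\le C\,w^{-\mu}$ uniformly in $\xx$, so for $w$ large $C w^{-\mu}\le1$, whence $\varphi\!\left(2\lambda|f(\xx)|\,C w^{-\mu}\right)\le C w^{-\mu}\varphi\!\left(2\lambda|f(\xx)|\right)$ by convexity and $\varphi(0)=0$; integrating, $\iphi[2\lambda f(A_w-1)]\le C w^{-\mu}\iphi[2\lambda f]=\mathcal{O}(w^{-\mu})$.

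For the main term, from $(S_wf-f A_w)(\xx)=\sum_{\kk}\chi(w\xx-\tk)\,\tfrac{w^n}{\Ak}\int_{\Rkw}\!\left(f(\uu)-f(\xx)\right)d\uu$ and two applications of Jensen's inequality --- to the $\kk$-series, using $\sum_{\kk}|\chi(w\xx-\tk)|\le\mo$, and to the average over $\Rkw$ --- followed by Tonelli, one gets
$$
\iphi[2\lambda(S_wf-f A_w)]\ \le\ \frac{1}{\mo}\int_{\R^n}\sum_{\kk}|\chi(w\xx-\tk)|\,\frac{w^n}{\Ak}\int_{\Rkw}\varphi\!\left(2\lambda\mo|f(\uu)-f(\xx)|\right)d\uu\,d\xx.
$$
I then split the $\kk$-sum according to $\|w\xx-\tk\|_2\le\gamma w$ (near part) or $\|w\xx-\tk\|_2>\gamma w$ (far part). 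On the far part, the bound $\varphi(2\lambda\mo|f(\uu)-f(\xx)|)\le\tfrac12\varphi(4\lambda\mo|f(\uu)|)+\tfrac12\varphi(4\lambda\mo|f(\xx)|)$ splits it into two pieces: interchanging orders and substituting, the one with $f(\uu)$ is at most a constant times $\left(\int_{\|\vv\|_2>\gamma w}|\chi(\vv)|\,d\vv\right)\iphi[4\lambda\mo f]=\mathcal{O}(w^{-\alpha})$ by $(\chi 4)$, while the one with $f(\xx)$ is at most a constant times $\big(\sup_{\xx}\sum_{\|w\xx-\tk\|_2>\gamma w}|\chi(w\xx-\tk)|\big)\iphi[4\lambda\mo f]$, which is $\mathcal{O}(w^{-\beta})$ by Lemma~\ref{lemma1}(ii); altogether the far part is $\mathcal{O}(w^{-\alpha})$.

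The near part is the heart of the argument. On it I change variables $\xx=\tk/w+\ttt$ (so $|\chi(w\xx-\tk)|=|\chi(w\ttt)|$ and $\|\ttt\|_2\le\gamma$) and use $|f(\uu)-f(\tk/w+\ttt)|\le|f(\uu)-f(\uu+\ttt)|+|f(\uu+\ttt)-f(\tk/w+\ttt)|$ together with convexity. The first summand, after $\tfrac{w^n}{\Ak}\le\tfrac{w^n}{\delta^n}$ and $\sum_{\kk}\int_{\Rkw}=\int_{\R^n}$, is bounded by a constant multiple of
$$
\int_{\|\ttt\|_2\le\gamma}w^n|\chi(w\ttt)|\,\iphi[4\lambda\mo(f(\cdot)-f(\cdot+\ttt))]\,d\ttt\ \le\ C_0\int_{\|\ttt\|_2\le\gamma}w^n|\chi(w\ttt)|\,\|\ttt\|_2^\nu\,d\ttt\ =\ \mathcal{O}(w^{-\theta})
$$
by $Lip_{\varphi}(\nu)$ and \eqref{ip_chi_tau} --- this is precisely where the new hypothesis \eqref{ip_chi_tau} is used. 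The second summand is the time-jitter contribution: the points $\uu+\ttt$ and $\tk/w+\ttt$ differ only by $\uu-\tk/w\in\prod_{i}[0,\Delta_{k_i}/w]$, a displacement independent of $\ttt$, so interchanging the $\ttt$-integral with this jitter-integral, substituting $\underline z=\tk/w+\ttt$ (so $|\chi(w\ttt)|=|\chi(w\underline z-\tk)|$), enlarging every box to $[0,\Delta/w]^n$, and summing $\sum_{\kk}|\chi(w\underline z-\tk)|\le\mo$, this term is bounded by a constant multiple of $w^n\int_{[0,\Delta/w]^n}\iphi[4\lambda\mo(f(\cdot+\underline r)-f(\cdot))]\,d\underline r\le C_0\, w^n\int_{[0,\Delta/w]^n}\|\underline r\|_2^\nu\,d\underline r=\mathcal{O}(w^{-\nu})$, the factor $w^n$ being absorbed by the volume $(\Delta/w)^n$ of the jitter box. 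Collecting the contributions gives $\iphi[\lambda(S_wf-f)]=\mathcal{O}(w^{-\mu})+\mathcal{O}(w^{-\theta})+\mathcal{O}(w^{-\nu})+\mathcal{O}(w^{-\alpha})=\mathcal{O}(w^{-\eps})$ with $\eps=\min\{\theta,\nu,\mu,\alpha\}$. I expect the delicate point to be the time-jitter term: one must resist reducing to point values of $f$ (which would destroy every modular estimate) and instead keep the $\ttt$-average weighted by $|\chi(w\ttt)|$ all the way, so that $Lip_{\varphi}(\nu)$ can be applied to a genuine modular of a shift; relatedly, every crude bound such as $\tfrac{w^n}{\Ak}\le\tfrac{w^n}{\delta^n}$ or $\sum_{\kk}|\chi(\cdot)|\le\mo$ must be matched with a compensating $w^{-n}$ coming either from a change of variables $\vv=w\ttt$ or from the volume of a jitter box.
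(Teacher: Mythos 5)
Your overall architecture is sound, and three of the four contributions come out correctly: the choice of $\lambda$, the treatment of the $(\chi 2)$ term giving $\mathcal{O}(w^{-\mu})$, the near-region shift term giving $\mathcal{O}(w^{-\theta})$ via \eqref{ip_chi_tau}, and the time-jitter term giving $\mathcal{O}(w^{-\nu})$ all match the paper's estimates in substance (the paper organizes the splitting differently, inserting the intermediate operator $\sum_{\kk}\chi(w\xx-\tk)\frac{w^n}{\Ak}\int_{\Rkw}f(\uu+\xx-\tk/w)\,d\uu$ at the level of the modular rather than using the pointwise triangle inequality $|f(\uu)-f(\xx)|\le|f(\uu)-f(\uu+\ttt)|+|f(\uu+\ttt)-f(\tk/w+\ttt)|$ inside $\varphi$ as you do, but these are equivalent bookkeeping choices for the near region).

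The gap is in your far region. After splitting $\varphi(2\lambda\mo|f(\uu)-f(\xx)|)\le\tfrac12\varphi(4\lambda\mo|f(\uu)|)+\tfrac12\varphi(4\lambda\mo|f(\xx)|)$, your $f(\uu)$-piece is indeed $\mathcal{O}(w^{-\alpha})$, but your $f(\xx)$-piece is controlled by $\sup_{\xx}\sum_{\|w\xx-\tk\|_2>\gamma w}|\chi(w\xx-\tk)|$, which Lemma \ref{lemma1}(ii) only bounds by $\mathcal{O}(w^{-\beta})$; there is no way to convert this discrete tail sum into the integral condition $(\chi 4)$. Your sentence ``altogether the far part is $\mathcal{O}(w^{-\alpha})$'' is therefore unjustified: what you actually prove is $\mathcal{O}(w^{-\min\{\alpha,\beta\}})$, hence $\eps=\min\{\theta,\nu,\mu,\alpha,\beta\}$, which is strictly weaker than the stated $\min\{\theta,\nu,\mu,\alpha\}$ whenever $\beta$ is small (and this matters: for the Fej\'er kernel $(\chi 3)$ only holds for $\beta\le 1$ while the corollaries claim the rate $w^{-\nu}$ independently of $\beta$). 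The paper avoids $\beta$ precisely through the intermediate term mentioned above: there the far-region comparison is between $f(\uu)$ and $f(\uu+\ttt)$ with $\ttt=\xx-\tk/w$, so after the change of variables both pieces reduce, by translation invariance of Lebesgue measure (i.e., $\int_{\R^n}\varphi(c|f(\uu+\ttt)|)\,d\uu=\int_{\R^n}\varphi(c|f(\uu)|)\,d\uu$), to $I^{\varphi}[c f]\cdot\int_{\|\ttt\|_2>\widetilde{\gamma}}w^n|\chi(w\ttt)|\,d\ttt=\mathcal{O}(w^{-\alpha})$, with no discrete tail sum ever appearing. To repair your argument you should add and subtract $f(\uu+\xx-\tk/w)$ \emph{before} discarding the $\uu$-dependence in the far region, rather than comparing $f(\uu)$ directly with $f(\xx)$ there.
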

\begin{proof}
By the assumption $f \in L^{\varphi}(\R^n) \cap Lip_{\varphi}(\nu)$, $0<\nu \miu 1$, we have that $I^{\varphi}[\lambda_1 f] < +\infty$,
and
$$
I^{\varphi}[\lambda_2(f(\cdot)-f(\cdot +\ttt))]\ =\ \mathcal{O}(\|\ttt\|_2^{\nu}), \hskip0.5cm as \hskip0.5cm \|\ttt\|_2 \to 0,
$$
for some $\lambda_1$, $\lambda_2>0$. More in detail, there exist $M_1$, $\overline{\gamma}>0$ such that 
$$
I^{\varphi}[\lambda_2(f(\cdot)-f(\cdot +\ttt))]\ \miu\ M_1 \|\ttt\|_2^{\nu},
$$
for every $\|\ttt\|_2 \miu \overline{\gamma}$.
Now, by the properties of the convex modular functional $I^{\varphi}$, for $\lambda>0$ we can split the term $I^{\varphi}[\lambda(S_wf - f)]$ as follows:
\begin{eqnarray*}
&& I^{\varphi}[\lambda(S_wf - f)]\ =\ \int_{\R^n} \varphi(\lambda|(S_wf)(\xx)-f(\xx)|)\ d\xx \\
&\miu& \frac{1}{3} \left\{ \int_{\R^n} \varphi\left(3\lambda\left|(S_wf)(\xx)- \sum_{\kk\in \Z^n}\chi(w\xx-\tk)\, \frac{w^n}{A_{\kk}}\int_{R^w_{\kk}}f\left(\uu+\xx-\frac{\tk}{w}\right)\, d\uu\, \right|\right)\ d\xx     \right.\\
&+& \int_{\R^n} \varphi\left(3\lambda \left|\sum_{\kk \in \Z^n}\chi(w\xx-\tk)\, \frac{w^n}{A_{\kk}} \int_{\Rkw}f\left(\uu+\xx-\frac{\tk}{w}\right)\, d\uu\ -\ f(\xx) A_w(\xx)\, \right| \right)\ d\xx \\
&+& \left. \int_{\R^n} \varphi\left(3\lambda\left|f(\xx)A_w(\xx) - f(\xx)\right|\right)\ d\xx\ \right\}=:\ \frac{1}{3} \left\{ J_1\ +\ J_2\ +\ J_3 \right\}.
\end{eqnarray*}
 In order to estimate the above terms, we begin considering the first one, namely
$J_1$. Applying Jensen's inequality (see, e.g., \cite{COSP11}) and Fubini-Tonelli theorem, 
\begin{eqnarray*}
&J_1& = \int_{\R^n} \varphi\left(3\lambda\left|(S_wf)(\xx)- \sum_{\kk\in \Z^n}\chi(w\xx-\tk)\, \frac{w^n}{\Ak} \int_{\Rkw}f\left(\uu+\xx-\frac{\tk}{w}\right)d\uu\right|\right)\ d\xx \\
&\miu& \frac{1}{m_{0,\Pi^n}(\chi)} \int_{\R^n}\sum_{\kk\in \Z^n}|\chi(w\xx-\tk)|\varphi \left( 3\lambda m_{0,\Pi^n}(\chi)  \frac{w^n}{\Ak} \int_{\Rkw}| f(\uu)-f(\uu+\xx-\frac{t_k}{w})|d\uu\right) d\xx \\
&\miu& \frac{1}{m_{0,\Pi^n}(\chi)}\sum_{\kk\in \Z^n} \int_{\R^n}|\chi(w\xx-\tk)|\varphi \left( 3\lambda m_{0,\Pi^n}(\chi)  \frac{w^n}{\Ak} \int_{\Rkw}| f(\uu)-f(\uu+\xx-\frac{\tk}{w})|d\uu \right) d\xx.
\end{eqnarray*}
By the change of variable $\ttt=\xx-\tk/w$, applying Fubini-Tonelli theorem and Jensen's inequality again, we may obtain the following:
$$
J_1\ \miu\ \frac{1}{m_{0,\Pi^n}(\chi)} \int_{\R^n}|\chi(w\ttt)|\sum_{\kk\in \Z^n}\varphi \left( 3\lambda m_{0,\Pi^n}(\chi) \, \frac{w^n}{\Ak} \int_{\Rkw}| f(\uu)-f(\uu+\ttt)|d\uu\right) d\ttt 
$$
$$
\miu\ \frac{1}{m_{0,\Pi^n}(\chi)} \int_{\R^n}|\chi(w\ttt)| \left\{  \sum_{\kk\in \Z^n}\frac{w^n}{\Ak} \int_{\Rkw} \varphi \left( 3\lambda m_{0,\Pi^n}(\chi)\, | f(\uu)-f(\uu+\ttt)|\right)\, d\uu \right\} d\ttt $$
$$
\miu\ \frac{1}{m_{0,\Pi^n}(\chi)\ \delta^n}\int_{\R^n}w^n|\chi(w\ttt)|\left\{ \sum_{k\in \Z}\int_{\Rkw} \varphi ( 3\lambda m_{0,\Pi^n}(\chi) \, | f(\uu)-f(\uu+\ttt)|)\, d\uu \right\} d\ttt
$$
$$
\hskip-1cm =\ \frac{1}{m_{0,\Pi^n}(\chi)\ \delta^n}\int_{\R^n}w^n|\chi(w\ttt) | \left\{ \int_{\R^n}\, \varphi ( 3\lambda m_{0,\Pi^n}(\chi) \, | f(\uu)-f(\uu+\ttt)|)\, d\uu \right\} d\ttt 
$$
$$
=\ \frac{1}{m_{0,\Pi^n}(\chi)\ \delta^n} \left\{ \int_{\|\ttt\|_2|\miu \widetilde{\gamma}} w^n|\chi(w\ttt)| \left( \int_{\R^n}\varphi ( 3\lambda m_{0,\Pi^n}(\chi) \, | f(\uu)-f(\uu+\ttt)|)\, d\uu \right) d\ttt\ \ +  \right. 
$$
$$
\hskip-2cm +\ \left. \int_{\|t\|_2 >\widetilde{\gamma}} w^n |\chi(w\ttt)| \left( \int_{\R^n}\varphi ( 3\lambda m_{0,\Pi^n}(\chi) \, | f(\uu)-f(\uu+\ttt)|)\, d\uu \right) \, d\ttt \right\} 
$$
$$
\hskip-7cm =\ \frac{1}{m_{0,\Pi^n}(\chi)\ \delta^n} \, \left\{ J_{1,1}\ +\ J_{1,2} \right\},
$$
with $\widetilde{\gamma}:= \min \left\{ \gamma,\ \overline{\gamma}\right\}$, where $\gamma >0$ is the constant of condition (\ref{ip_chi_tau}). Now, without any loss of generality, we can choose $\lambda>0$ sufficiently small, such that:
$$
\lambda \miu\ \min\left\{\lambda_1/(3M_2),\ \lambda_1/(6m_{0,\Pi^n}(\chi)),\ \lambda_2 / (3m_{0,\Pi^n}(\chi)),\ \lambda_2 \delta^n /(3 \Delta^n m_{0,\Pi^n}(\chi))\right\},
$$
where $M_2>0$ is a suitable positive constant obtained from condition $(\chi 2)$, i.e.,
$$
|\sum_{k \in \Z} \chi(w\xx-\tk) - 1 |\ \miu M_2\, w^{-\mu},
$$
uniformly with respect to $\xx \in \R^n$ and for sufficiently large $w>0$. Now, recalling that $f \in Lip_{\varphi}(\nu)$, by condition (\ref{ip_chi_tau}) it is easy to deduce the following estimate:
\begin{eqnarray*}
J_{1,1} &\miu& \int_{\|\ttt\|_2 \miu \widetilde{\gamma}}w^n\, |\chi(w\ttt)|\, \left[\int_{\R^n}\varphi ( \lambda_2 | f(\uu)-f(\uu+\ttt)|)\, d\uu \right] d\ttt \\
&\miu&\ M_1 \int_{\|\ttt\|_2 \miu \widetilde{\gamma}}w^n\, |\chi(w\ttt)|\ \|\ttt\|_2^{\nu}\ d\ttt\ =\ \mathcal{O}(w^{-\theta}),\ \hskip0.5cm as \hskip0.5cm w \to +\infty,
\end{eqnarray*}
while, in case of $J_{1,2}$, by the convexity of $\varphi$, we have
\begin{eqnarray*}
&& J_{1,2}\ \miu \\
&\miu& \!\! \int_{\|\ttt\|_2>\widetilde{\gamma}} \!\!\!\!\!\! w^n\, |\chi(w\ttt)| \frac{1}{2} \left[ \int_{\R^n}\varphi ( 6\lambda m_{0,\Pi^n}(\chi)  | f(\uu)|)\, d\uu + \int_{\R^n}\varphi ( 6\lambda m_{0,\Pi^n}(\chi) | f(\uu+\ttt)|)\, d\uu \right] d\ttt.
\end{eqnarray*}
Observing that
$$
\int_{\R^n}\varphi ( 6\lambda m_{0,\Pi^n}(\chi)  | f(\uu+\ttt)|)\ d\uu\ =\ \int_{\R^n}\varphi ( 6\lambda m_{0,\Pi^n}(\chi)  | f(\uu)|)\ d\uu,
$$
for every $\ttt \in \R^n$, it turns out
$$
J_{1,2}\ \miu\ \int_{\|\ttt\|_2>\widetilde{\gamma}} w^n\, |\chi(w\ttt)|\int_{\R^n}\varphi (6\lambda m_{0,\Pi^n}(\chi) | f(\uu)|)\, d\uu 
$$
$$
\hskip-0.45cm =\ \! I^{\varphi}[6\lambda m_{0,\Pi^n}(\chi) f] \ \int_{\|\ttt\|_2>\widetilde{\gamma}}  w^n\, |\chi(w\ttt)|\, d\ttt.
$$
Therefore, by all the above inequality, assumptions and considerations, we deduce that
$$
J_{1,2}\ =\ \mathcal{O}(w^{-\alpha}),\hskip0.5cm as \hskip0.5cm w \to +\infty.
$$
Now, we estimate $J_2$. Setting $\ttt=\uu-\tk/w$ we have
$$
J_2\ \miu\ \int_{\R^n}\varphi\left(3\lambda \left| \sum_{\kk \in \Z^n} \chi(w\xx-\tk) \left[ \frac{w^n}{\Ak}\int_{\Rkw}f(\uu+\xx-\frac{\tk}{w})\ d\uu-f(\xx) \right]   \right|\right)\ d\xx 
$$
$$
=\ \int_{\R^n}\varphi\left(3\lambda \left| \sum_{\kk \in \Z^n} \chi(w\xx-\tk) \left[  \frac{w^n}{\Ak}\int_{(\Rkw - \tk/w)}f(\xx+\ttt)\ d\ttt\ -\ f(\xx) \right]   \right|\right)\ d\xx,
$$
where $(\Rkw - \tk/w) := [0,\, \Delta_{k_1}/w] \times ... \times [0,\, \Delta_{k_n}/w]$, for every $\kk \in \Z^n$ and $w>0$. Thus,
$$
J_2\ \miu\  \int_{\R^n}\varphi\left(3\lambda \left| \sum_{\kk \in \Z^n} \chi(w\xx-\tk) \frac{w^n}{\Ak}\int_{(\Rkw - \tk/w)}[f(\xx+\ttt)-f(\xx)]\, d\ttt   \right|\right)\ d\xx
$$
$$
\miu\ \int_{\R^n}\varphi\left(3\lambda \left[ \sum_{\kk \in \Z^n} \left| \chi(w\xx-\tk)\right| \frac{w^n}{\delta^n}\int_{(\Rkw - \tk/w)}|f(\xx+\ttt)-f(\xx)|\ d\ttt  \right] \right)\ d\xx
$$
$$
\miu\ \int_{\R^n}\varphi\left(3\lambda \left[ \sum_{\kk \in \Z^n} |\chi(w\xx-\tk)| \frac{w^n}{\delta^n} \int_{(\Delta_{w})}|f(\xx+\ttt)-f(\xx)|\ d\ttt   \right]\right)\ d\xx,
$$
where $(\Delta_{w}):=[0,\, \Delta/w] \times ... \times [0,\, \Delta/w]$. Then, using Jensen's inequality, Fubini-Tonelli theorem, and since $f \in Lip_{\varphi}(\nu)$, $0<\nu \miu 1$, for sufficiently large $w>0$ we can write
$$
J_2\ \miu\ \int_{\R^n}\varphi\left(3\lambda\, m_{0,\Pi^n}(\chi)\ \frac{w^n}{\delta^n} \int_{(\Delta_{w})}|f(\xx+\ttt)-f(\xx)|\ d\ttt \, \right)\ d\xx,
$$
$$
\miu\ \int_{\R^n}\frac{w^n}{\Delta^n} \left[ \int_{(\Delta_{w})}\varphi\left(3\lambda m_{0,\Pi^n}(\chi) \frac{\Delta^n}{\delta^n}\, |f(\xx+\ttt)-f(\xx)|\right)\ d\ttt \right] d\xx,
$$
$$
=\ \frac{w^n}{\Delta^n} \int_{(\Delta_{w})}  \left[ \int_{\R^n} \varphi\left(3\lambda m_{0,\Pi^n}(\chi) \frac{\Delta^n}{\delta^n}\, |f(\xx+\ttt)-f(\xx)|\right)\ d\xx\, \right]  d\ttt
$$
$$
\hskip-2.3cm \miu\ \frac{w^n}{\Delta^n} \int_{(\Delta_{w})}  \left[ \int_{\R^n} \varphi\left(\lambda_2\, |f(\xx+\ttt)-f(\xx)|\right)\ d\xx\, \right]  d\ttt
$$
$$
\hskip-6cm \miu\ M_1\, \frac{w^n}{\Delta^n} \int_{(\Delta_{w})} \|\ttt\|_2^{\nu}\,  d\ttt,
$$
being $\|\ttt\|_2 \miu \overline{\gamma}$. By the change of variable $\ttt=\uu/w$, and denoting by $(\overline{\Delta}):=[0,\, \Delta] \times ... \times [0, \Delta]$, we have that
$$
J_2\ \miu\ \frac{M_1}{\Delta^n} \int_{(\overline{\Delta})}\|\uu/w\|_2^{\nu}\ d\uu\ =\ w^{-\nu}\, \frac{M_1}{\Delta^n} \int_{(\overline{\Delta})}\|\uu\|_2^{\nu}\ d\uu\  =:\ Cw^{-\nu},
$$
for sufficiently large $w>0$, i.e., $J_2=\mathcal{O}(w^{-\nu})$, as $w \to +\infty$.

  Finally, we proceed by estimating the last term of the initial inequality, namely $J_3$. Recalling condition $(\chi 2)$ and the convexity of $\varphi$, for sufficiently large $w>0$ it is easy to obtain
\begin{eqnarray*}
J_3\ &\miu&\ \int_{\R^n} \varphi\left(3\lambda\left|f(\xx)||A_w(\xx) - 1|\right|\right)\ d\xx\ \miu\ \int_{\R^n} \varphi\left(3\lambda M_2 w^{-\mu}\left|f(\xx)|\right|\right)\ d\xx \\
&\miu&\ w^{-\mu} I^{\varphi}[3M_2\lambda f] \miu\ w^{-\mu} I^{\varphi}[\lambda_1 f]\ <\ +\infty. 
\end{eqnarray*}
Thus, combining all the above estimates, we can conclude that
$$
I^{\varphi}[\lambda(S_wf-f)]\ =\ \mathcal{O}(w^{-\eps}), \hskip0.5cm as \hskip0.5cm w \to\ +\infty,
$$
where $\eps:= \min\left\{\theta,\ \nu,\ \mu,\ \alpha\right\}$.
\end{proof}
  The assumptions required in the above theorem are rather standard when the problem of the rate of approximation for a family of linear discrete operators is studied in Orlicz spaces. In particular, condition (\ref{ip_chi_tau}) represents a relation between the kernel of $S_w$ and the Lipschitz class under consideration. Condition (\ref{ip_chi_tau}) is clearly satisfied when, for instance, the kernel $\chi$ is with compact support. Denoting by $B(\underline{0}, R)$ the closed ball centered in the origin and with radius $R>0$, if $supp\ \chi \subset B(\underline{0}, R)$, we have
\begin{equation} \label{nu+1}
\int_{\|\ttt\|_2\miu \gamma}w^n\ |\chi(w\, \ttt)|\ \|t\|_2^{\nu}\ d\ttt\ \miu\ \int_{\|\uu\|_2\miu R} |\chi(\uu)|\ \|\uu/w\|_2^{\nu}\ d\uu\ =:\ K w^{-\nu},
\end{equation}
for sufficiently large $w>0$, i.e., $\theta=\nu$. Moreover, in the above case, $\chi$ satisfies condition $(\chi 4)$ for every $\alpha >0$.
We obtain the following
\begin{corollary} \label{cor4}
Let $\chi$ be a kernel with compact support. Moreover, let $f \in L^{\varphi}(\R^n) \cap Lip_{\varphi}(\nu)$, $0 < \nu \miu 1$. Then, there exists $\lambda>0$ such that
$$
I^{\varphi}[\lambda(S_wf-f)]\ =\ \mathcal{O}(w^{-\eps}), \hskip0.5cm as \hskip0.5cm w \to +\infty,
$$
with $\eps:=\min\left\{\nu,\ \mu\right\}$, where $\mu >0$ is the constant of condition $(\chi 2)$. 
\end{corollary}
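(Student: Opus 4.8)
The plan is to deduce Corollary~\ref{cor4} directly from Theorem~\ref{order_orlicz_spaces}, by checking that a compactly supported kernel automatically satisfies the two extra hypotheses appearing there: condition (\ref{ip_chi_tau}) (with the best possible exponent $\theta=\nu$) and condition $(\chi 4)$ with an arbitrarily large exponent $\alpha$.

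First I would verify (\ref{ip_chi_tau}). Since $supp\,\chi \subset B(\underline{0},R)$ for some $R>0$, the argument already displayed in (\ref{nu+1}) applies: performing the change of variable $\uu = w\,\ttt$ gives
$$
\int_{\|\ttt\|_2 \miu \gamma} w^n |\chi(w\ttt)|\,\|\ttt\|_2^{\nu}\,d\ttt\ =\ w^{-\nu}\int_{\|\uu\|_2 \miu w\gamma} |\chi(\uu)|\,\|\uu\|_2^{\nu}\,d\uu\ \miu\ w^{-\nu}\int_{\|\uu\|_2 \miu R} |\chi(\uu)|\,\|\uu\|_2^{\nu}\,d\uu\ =:\ K\,w^{-\nu},
$$
where the last integral is finite because $\chi \in L^1(\R^n)$ by $(\chi 1)$ and $\|\uu\|_2^{\nu}$ is bounded on $B(\underline{0},R)$. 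Hence (\ref{ip_chi_tau}) holds with $\theta=\nu$.

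Next I would observe that a compactly supported $\chi$ satisfies $(\chi 4)$ for every $\alpha>0$. Indeed, fix $M>0$; as soon as $w>R/M$, the support of $\uu\mapsto\chi(w\uu)$ is contained in $\{\|\uu\|_2 \miu R/w\}$, which is disjoint from $\{\|\uu\|_2>M\}$, so $\int_{\|\uu\|_2>M} w^n|\chi(w\uu)|\,d\uu = 0$ for all sufficiently large $w$. This is trivially $\mathcal{O}(w^{-\alpha})$ for any $\alpha$, so we may take $\alpha$ as large as we wish, in particular $\alpha \mau \min\{\nu,\mu\}$.

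Finally, applying Theorem~\ref{order_orlicz_spaces} with $\theta=\nu$ and this arbitrarily large $\alpha$ yields $\lambda>0$ with $I^{\varphi}[\lambda(S_w f - f)] = \mathcal{O}(w^{-\eps})$, where $\eps = \min\{\theta,\nu,\mu,\alpha\} = \min\{\nu,\mu\}$, which is the assertion. There is no genuine obstacle here; the only point deserving a word of care is the interplay between the fixed constants $R$, $\gamma$ and $M$ in the two verifications above, since one must pass to sufficiently large $w$ (depending on these fixed quantities) in each case — but this is precisely the ``for sufficiently large $w$'' already built into the statements of (\ref{ip_chi_tau}) and $(\chi 4)$.
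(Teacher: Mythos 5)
Your proposal is correct and follows essentially the same route as the paper: the verification of (\ref{ip_chi_tau}) with $\theta=\nu$ is exactly the computation displayed in (\ref{nu+1}), the observation that a compactly supported kernel satisfies $(\chi 4)$ for every $\alpha>0$ is stated there as well, and the corollary is then read off from Theorem \ref{order_orlicz_spaces}. Nothing is missing.
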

Condition (\ref{ip_chi_tau}) is satisfied also in case of kernels with unbounded support if we require an additional condition on $\chi$, i.e., that the multivariate absolute moment 
\begin{equation} \label{semp}
m_{\nu}(\chi):= \int_{\R^n}|\chi(\uu)|\ \|\uu\|_2^{\nu} d\uu\ <\ +\infty,
\end{equation}
for some $0< \nu \miu 1$. Indeed, if condition (\ref{semp}) holds, for every $\gamma>0$ we have
\begin{equation} \label{nu+1}
\int_{\|\ttt\|_2\miu \gamma}w^n\, |\chi(w\ttt)|\, \|\ttt\|_2^{\nu}\ d\ttt\ \miu\ \int_{\|\uu\|_2\miu \gamma w} |\chi(\uu)|\ \|\uu/w\|_2^{\nu}\ d\uu\ \miu\ m_{\nu}(\chi)\ w^{-\nu},
\end{equation}
for every $w>0$, which shows that (\ref{ip_chi_tau}) holds for $\theta = \nu$. Thus we obtain the following
\begin{corollary} \label{cor3}
Let $\chi$ be a kernel with $m_{\nu}(\chi)<+\infty$, for some $0<\nu \miu 1$. Moreover, let $f \in L^{\varphi}(\R^n) \cap Lip_{\varphi}(\nu)$ be fixed. Then there exists $\lambda>0$ such that
$$
I^{\varphi}[\lambda(S_wf-f)]\ =\ \mathcal{O}(w^{-\eps}), \hskip0.5cm as \hskip0.5cm w \to +\infty,
$$
with $\eps :=\min\left\{\nu,\ \mu,\ \alpha\right\}$, where $\mu>0$ and $\alpha>0$ are the constants of conditions $(\chi 2)$ and $(\chi 4)$, respectively. 
\end{corollary}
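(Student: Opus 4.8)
The plan is to obtain this corollary as an immediate specialization of Theorem \ref{order_orlicz_spaces}. The only thing that genuinely needs to be checked is that the single moment hypothesis $m_{\nu}(\chi)<+\infty$ forces the structural condition (\ref{ip_chi_tau}) on the kernel, and moreover with the sharp exponent $\theta=\nu$; once this is in hand the conclusion drops out of the already established theorem.

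To verify this, I would fix an arbitrary $\gamma>0$ and perform the change of variable $\uu=w\ttt$ in the integral on the left-hand side of (\ref{ip_chi_tau}). Under this substitution the Jacobian $w^n$ cancels the factor $w^n$ in front of $|\chi(w\ttt)|$, while each coordinate of $\ttt$ picks up a factor $1/w$, so $\|\ttt\|_2^{\nu}$ contributes $w^{-\nu}$; the domain $\{\|\ttt\|_2\miu\gamma\}$ becomes $\{\|\uu\|_2\miu\gamma w\}$. One is left with $w^{-\nu}\int_{\|\uu\|_2\miu\gamma w}|\chi(\uu)|\,\|\uu\|_2^{\nu}\,d\uu$, which is bounded above by $w^{-\nu}\,m_{\nu}(\chi)$ for every $w>0$. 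Since $m_{\nu}(\chi)<+\infty$ by hypothesis, this is $\mathcal{O}(w^{-\nu})$ as $w\to+\infty$, so (\ref{ip_chi_tau}) holds with $\theta:=\nu$. (This is exactly the estimate recorded in the discussion preceding the statement.)

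With that in place, $f\in L^{\varphi}(\R^n)\cap Lip_{\varphi}(\nu)$, $\chi$ is a kernel, and (\ref{ip_chi_tau}) is satisfied, so every hypothesis of Theorem \ref{order_orlicz_spaces} holds. Invoking that theorem produces $\lambda>0$ with $I^{\varphi}[\lambda(S_wf-f)]=\mathcal{O}(w^{-\eps})$ as $w\to+\infty$, where $\eps=\min\{\theta,\ \nu,\ \mu,\ \alpha\}$. Substituting $\theta=\nu$ collapses the minimum to $\min\{\nu,\ \mu,\ \alpha\}$, which is the asserted rate.

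There is no real obstacle here: the corollary is designed to be a direct consequence of Theorem \ref{order_orlicz_spaces}, and the only point requiring a little care is bookkeeping in the scaling argument — namely that rescaling $w\ttt\mapsto\uu$ is precisely what converts the finite multivariate absolute moment $m_{\nu}(\chi)$ into a uniform-in-$w$ bound of order $w^{-\nu}$ for the quantity in (\ref{ip_chi_tau}).
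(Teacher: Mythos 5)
Your proposal is correct and follows exactly the paper's route: the paper also derives condition (\ref{ip_chi_tau}) with $\theta=\nu$ from $m_{\nu}(\chi)<+\infty$ via the change of variable $\uu=w\ttt$ (see the estimate (\ref{nu+1}) preceding the corollary), and then invokes Theorem \ref{order_orlicz_spaces} so that the minimum collapses to $\min\{\nu,\mu,\alpha\}$. Nothing further is needed.
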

\begin{remark} \rm
Examples of convex $\varphi$-functions generating Orlicz spaces, where the theory of multivariate sampling Kantorovich operators holds, are:\\
$\varphi_p(u) :=u^p$, $1 \miu p < +\infty$, which generates the well-known $L^p(\R^n)$ spaces, $\varphi_{\alpha, \beta}:= u^{\alpha}\log^{\beta}(u+e)$, for $\alpha \mau 1$, $\beta>0$, which gives rise to interpolation spaces and  finally, $\varphi_{\gamma}(u)=e^{u^{\gamma}}-1$, for $\gamma>0$, $u \mau 0$, in order to obtain the exponential spaces. It is well-known that the modular functional corresponding to $\varphi_p(u)$ is $I^{\varphi_p}[f]:=\|f\|^p_p$. The modular functionals corresponding to $\varphi_{\alpha, \beta}$ and $\varphi_{\gamma}$ are
$$
I^{\varphi_{\alpha, \beta}}[f] := \int_{\R^n} |f(\xx)|^{\alpha} \log^{\beta}(e+|f(\xx)|)\ d\xx,\ \hskip0.5cm (f \in M(\R^n)),
$$
and
$$
I^{\varphi_{\gamma}}[f] := \int_{\R^n} (e^{|f(\xx)|^{\gamma}}-1)\ d\xx,\ \hskip0.5cm (f \in M(\R^n)),
$$
respectively. The $L^{\alpha} \log^{\beta} L$-spaces (interpolation or Zygmund spaces generated by $\varphi_{\alpha, \beta}$), are widely used in the theory of partial differential equations, while the exponential spaces (generated by $\varphi_{\gamma}$) are important for embedding theorems between Sobolev spaces.
\end{remark}

%%%%%%

\section{Applications to special kernels} \label{applications}

One important fact in our theory is the choice of the kernels, which influence the order of approximation that can be achieved by our operators (see e.g., \cite{COVI3,COVI4} in one-dimensional setting).

  To construct, in general, kernels satisfying all the assumptions $(\chi_i)$, $i=1,...,4$, is not very easy.

 For this reason, here we show a procedure useful to construct examples using product of univariate kernels, see, e.g., \cite{BUFIST,COVI1,COVI2,CLCOMIVI1}. For the sake of simplicity, we consider only the case of uniform sampling scheme, i.e., $t_{\underline{k}}=\underline{k}$.

 Denote by $\chi_1, ..., \chi_n$, the univariate functions $\chi_i : \R \to \R$, $\chi_i \in L^1(\R)$ and are bounded in a neighborhood of $0 \in \R$, satisfying
the following assumptions:
\begin{equation} \label{beta}
m_{\beta,\Pi^1}(\chi_i)\ :=\ \sup_{x \in \R}\sum_{k \in \Z}\left|\chi_i(x-k)\right|\cdot |x-k|^{\beta}<\ +\infty,
\end{equation}
$i=1,...,n$, for some $\beta>0$; moreover
\begin{equation} \label{sing}
\sum_{k \in \Z}\chi_i(x-k) = 1,
\end{equation}
for every $x \in \R$, $i=1,...,n$, and for every $\widetilde{M}>0$
\begin{equation} \label{kjkjkbbb}
\int_{|x|>\widetilde{M}}w \, |\chi_i(w x)|\, dx\ =\ \mathcal{O}(w^{-\alpha}), \hskip0.5cm as \hskip0.5cm w \to +\infty,
\end{equation}
for every $i=1,...,n$ and for some $\alpha>0$.
\begin{remark} \label{remark3} \rm
Note that condition $(\ref{sing})$ is equivalent to 
\begin{displaymath}
\widehat{\chi}(k) :=\ \left\{
\begin{array}{l}
0, \hskip0.5cm k \in \Z\setminus \left\{0\right\}, \\
1, \hskip0.5cm k=0,
\end{array}
\right.
\end{displaymath}
where $\widehat{\chi}(v):=\int_{\R}\chi(u)e^{-ivu}\ du$, $v \in \R$, denotes the Fourier transform of $\chi$; see \cite{BUNE,BABUSTVI,COVI1,COVI3}.
\end{remark}
Now, setting
\begin{equation}
\chi(\underline{x})\ :=\ \prod_{i=1}^n\chi_i(x_i),  \hskip1cm \underline{x}=(x_1,...,x_n) \in \R^n,
\end{equation} 
we can prove that $\chi$ is a multivariate kernel for the operators $S_w$ satisfying all the assumptions of our theory. Indeed,
we have that $\chi \in L^1(\R^n)$ since
\begin{displaymath}
\int_{\R^n}\left|\chi(\xx)\right|\ d\xx\ =\ \int_{\R^n}\prod^n_{i=1}\left|\chi_i(x_i)\right|\ dx_1...dx_n
=\ \prod^n_{i=1}\int_{\R}\left|\chi_i(x_i)\right|\ dx_i\ =\ \prod^n_{i=1} \| \chi_i \|_1\  <\ +\infty.
\end{displaymath}
Moreover, it is also obviously bounded in a neighborhood of the origin, then condition $(\chi 1)$ holds.
Further, by condition (\ref{sing})
\begin{equation}
A_w(\xx)\ =\ \sum_{\kk \in \Z^n}\chi(w \xx-\kk)\ =\ \prod^n_{i=1} \hskip0.2cm \sum_{k_i \in \Z} \chi_i(w x_i-k_i)\ =\ 1,
\end{equation}
then $A_w(\xx)-1=0$, for every $\xx \in \R^n$ and $w>0$, i.e., $\chi$ satisfies condition $(\chi 2)$ for every $\mu>0$.
Concerning condition $(\chi 3)$ we have:
$$
m_{\beta,\Pi^n}(\chi)\ =\ \sup_{\xx \in \R^n}\sum_{\kk \in \Z^n}\left|\chi(\xx-\kk)\right|\cdot \|\xx-\kk\|_2^{\beta} \miu\ K\, \sup_{\xx \in \R^n}\sum_{\kk \in \Z^n}\left|\chi(\xx-\kk)\right|\cdot \|\xx-\kk\|^{\beta},
$$
where $\| \xx \| := \max\left\{|x_i|,\, i=1,...,n\right\}$, $\xx \in \R^n$, and $K>0$ is a suitable constant (we recall that all the norms are equivalent in $\R^n$). Denoting by $\kk_{[j]} \in \Z^{n-1}$ the vectors $\kk_{[j]}:=(k_1, .., k_{j-1},k_{j+1},...,k_n)$ and by $\Pi^{n-1}_{[j]}$ the restriction of the sequence $\Pi^n$ to the vectors $\kk_{[j]}$, with $\kk_{[j]} \in \Z^{n-1}$, then we can write 
$$
\hskip-2.4cm m_{\beta,\Pi^n}(\chi)\ \miu\ K \sup_{\xx \in \R^n} \left[ \sum_{\kk \in \Z^n}\left|\chi(\xx-\kk)\right|\cdot \left( \sum_{j=1}^n |x_j-k_j|^{\beta} \right) \right]
$$
$$
\hskip-1.3cm \miu\ K \sup_{\xx \in \R^n} \sum_{j=1}^n\left[ \sum_{\kk \in \Z^n}\left|\chi(\xx-\kk)\right|\cdot |x_j-k_j|^{\beta}  \right]
$$
$$
\miu\ K \sup_{\xx \in \R^n} \sum_{j=1}^n\left[ \sum_{\kk \in \Z^n}\left(\prod_{i=1}^n|\chi_i(x_i-k_i)|\right)\cdot |x_j-k_j|^{\beta}  \right]
$$
$$
\miu\ K \sup_{\xx \in \R^n} \sum_{j=1}^n\left[ \sum_{\kk_{[j]} \in \Z^{n-1}}\left(\prod_{\stackrel{i=1}{i \neq j}}^n|\chi_i(x_i-k_i)|\right)\sum_{k_j \in \Z}|\chi_j(x_j-k_j)|\cdot |x_j-k_j|^{\beta}  \right]
$$
$$
\miu\ K \sup_{\xx \in \R^n} \sum_{j=1}^n\left[ \sum_{\kk_{[j]} \in \Z^{n-1}}\left(\prod_{\stackrel{i=1}{i \neq j}}^n|\chi_i(x_i-k_i)|\right) m_{\beta, \Pi^1}(\chi_j)  \right]
$$
$$
\miu\ K \sup_{\xx \in \R^n} \sum_{j=1}^n\left[ \prod_{\stackrel{i=1}{i \neq j}}^n\left(\sum_{\kk_{[j]} \in \Z^{n-1}} |\chi_i(x_i-k_i)|\right) m_{\beta, \Pi^1}(\chi_j)  \right]
$$
$$
\miu K \sup_{\xx \in \R^n} \sum_{j=1}^n\left[ \prod_{\stackrel{i=1}{i \neq j}}^n m_{0, \Pi_{[j]}^{n-1}}(\chi_i) \cdot m_{\beta, \Pi^1}(\chi_j)  \right] = K \sum_{j=1}^n\left[ \prod_{\stackrel{i=1}{i \neq j}}^n m_{0, \Pi_{[j]}^{n-1}}(\chi_i) \cdot m_{\beta, \Pi^1}(\chi_j)  \right]  < +\infty.
$$
Finally, for every $M>0$ there exists a suitable constant $\widetilde{M}>0$ such that
$$
\int_{\|\uu\|_2>M}w^n\, |\chi(w \uu)|\, d\uu\ \miu\ \int_{\|\uu\|>\widetilde{M}}w^n\, |\chi(w \uu)|\, d\uu\ =\ \int_{\|\uu\|>\widetilde{M}}\left[ \prod^n_{i=1} w\, |\chi_i(w u_i)|\right]\, d\uu
$$
$$
\miu\ \sum_{j=1}^n \left\{ \, \left[ \int_{\| \uu_{[j]}\| > \widetilde{M}}  \prod_{\stackrel{i=1}{i \neq j}}^n w |\chi_i(wu_i)|\, du_{[j]}\right] \cdot \int_{|u_j|>\widetilde{M}} w\, |\chi_j(w u_j)| \, du_j\right\}
$$
$$
\hskip-0.6cm \miu\ \sum_{j=1}^n \left\{ \, \left[ \int_{\| \ttt_{[j]}\| > w \widetilde{M}}  \prod_{\stackrel{i=1}{i \neq j}}^n |\chi_i(t_i)|\, d\ttt_{[j]}\right] \cdot \int_{|u_j|>\widetilde{M}} w\, |\chi_j(w u_j)| \, du_j\right\}
$$
$$
\hskip-1.3cm \miu\ \sum_{j=1}^n \left\{ \, \left[ \int_{\R^{n-1}}  \prod_{\stackrel{i=1}{i \neq j}}^n |\chi_i(t_i)|\, d\ttt_{[j]}\right] \cdot \int_{|u_j|>\widetilde{M}} w\, |\chi_j(w u_j)| \, du_j\right\}
$$
$$
\hskip-1.3cm \miu\ \sum_{j=1}^n \left\{ \, \left[\prod_{\stackrel{i=1}{i \neq j}}^n \int_{\R^{n-1}}   |\chi_i(t_i)|\, d\ttt_{[j]}\right] \cdot \int_{|u_j|>\widetilde{M}} w\, |\chi_j(w u_j)| \, du_j\right\}
$$
$$
\hskip-2.6cm \miu\ \sum_{j=1}^n \left\{ \, \left(\prod_{\stackrel{i=1}{i \neq j}}^n \|\chi_j\|_1\right) \cdot \int_{|u_j|>\widetilde{M}} w\, |\chi_j(w u_j)| \, du_j\right\},
$$
where we used the change of variable $w\, \uu_{[j]}=\ttt_{[j]}$;
then in correspondence to the constant $\alpha>0$ of condition (\ref{kjkjkbbb}) there holds: 
$$
\int_{\|\uu\|_2>M}w^n\, |\chi(w \uu)|\, d\uu =\ \mathcal{O}(w^{-\alpha}), \hskip0.5cm as \hskip0.5cm w\to +\infty,
$$
for every $M>0$, i.e., $\chi$ satisfies condition $(\chi 4)$ with $\alpha>0$.
Thus, we can say that $\chi$ is a multivariate kernel.

 Now, we will show some practical examples of multivariate kernels, constructed by product of univariate kernels.

  In what follows, we denote by
\begin{displaymath}
F(x)\ :=\ \frac{1}{2}\mbox{sinc}^2\left(\frac{x}{2}\right) \hskip0.5cm (x \in \R),
\end{displaymath}
the well-known one-dimensional Fej\'{e}r's kernel.
The $sinc(x)$ function is defined by
\begin{displaymath}
\mbox{sinc}(x)\ :=\ \left\{
\begin{array}{l}
\disp \frac{\sin \pi x}{\pi x}, \hskip1cm x \in \R\setminus \left\{0\right\}, \\
\hskip0.5cm 1, \hskip1.5cm x=0.
\end{array}
\right.
\end{displaymath}
It is easy to observe that the function $F$
is bounded, belongs to $L^1(\R)$ and satisfies the moment conditions $(\ref{beta})$ for every $0 < \beta \miu 1$. For the above properties see, e.g., \cite{BUNE,BABUSTVI,COVI3,COVI4}. Moreover, it is also possible to observe that
the Fourier transform of $F$ is given by (see \cite{BUNE})
\begin{displaymath}
\widehat{F}(v) :=\ \left\{
\begin{array}{l}
1-|v/\pi|,\ \hskip0.5cm |v|\miu \pi, \\
0,\ \hskip1.85cm |v|>\pi,
\end{array}
\right.
\end{displaymath}
and therefore condition $(\ref{sing})$ is fulfilled as a consequence of Remark \ref{remark3}. In addition, 
$$
\int_{|u|>\widetilde{M}}w\ F(w u)\ du\ \miu\ \frac{2}{\pi^2}w^{-1} \int_{|u|>\widetilde{M}}\frac{1}{u^2}\ du\ =:\ Kw^{-1},
$$
for every $\widetilde{M}>0$ and $w>0$, and hence condition (\ref{kjkjkbbb}) holds for $\alpha=1$. Finally, in \cite{BUNE} it is proved that the Fej\'{e}r's kernel satisfies the finiteness of the absolute moments, i.e., $m_{\nu}(F)<+\infty$, for every $0<\nu \miu 1$.
Then, according to the procedure described in this section, we can define by $\disp \mathcal{F}_n(\xx)= \prod^n_{i=1}F(x_i)$, $\underline{x}=(x_1,...,x_n) \in \R^n,$ the multivariate Fej\'{e}r's kernel, which satisfies the conditions upon a multivariate kernel with $m_{\nu}(\mathcal{F}_n)<+\infty$, for every $0<\nu \miu 1$. 
Then the multivariate sampling Kantorovich operators based on the Fej\'er's kernel, in case of the uniform sampling, take now the form
$$
(S^{\mathcal{F}_n}_w f)(\xx)\ =\ \sum_{\kk \in \Z^n}\left[w^n\int_{\Rkw}f(\uu)\ d\uu\right] \mathcal{F}_n\left(w\xx-\kk\right), \hskip0.5cm (\xx \in \R^n),
$$
for every $w>0$, where $f:\R^n \rightarrow \R$ is a locally integrable function such that the above series is convergent for every $\xx \in \R^n$. For $S^{\mathcal{F}_n}_w f$, from Theorem \ref{theorem1}, Theorem \ref{modular_cont} and Corollary \ref{cor3}, we can obtain respectively the following
\begin{corollary} \label{cor6.2}
Let $f \in Lip_{\infty}(\nu)$, with $0 < \nu \miu 1$. Then
$$
\left\|S^{\mathcal{F}_n}_w f-f\right\|_{\infty}\ =\ \mathcal{O}(w^{-\nu}), \hskip0.5cm as \hskip0.5cm w\rightarrow +\infty.
$$
In case of Orlicz spaces, for every $f \in L^{\varphi}(\R^n)$, there holds
$$
I^{\varphi}[\lambda S^{\mathcal{F}_n}_w f]\ \miu\ \frac{1}{\delta^n}\ I^{\varphi}[\lambda f], 
$$
for some $\lambda>0$ and for every $w> 0$, since $\| \mathcal{F}_n\|_1 = 1$ and $m_{0, \Pi^n}(\mathcal{F}_n) = 1$. In particular, $S^{\mathcal{F}_n}_w f \in L^{\varphi}(\R^n)$ whenever $f \in L^{\varphi}(\R^n)$.

\noindent Moreover, for any $f \in L^{\varphi}(\R^n)\cap Lip_{\varphi}(\nu)$, $0 < \nu \miu 1$, there exists $\lambda>0$ such that
$$
I^{\varphi}[\lambda(S^{\mathcal{F}_n}_wf-f)]\ =\ \mathcal{O}(w^{-\nu}), \hskip0.5cm as\ \hskip0.5cm w \to +\infty.
$$
\end{corollary}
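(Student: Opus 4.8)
The plan is to read off the three assertions, one at a time, from Theorem \ref{theorem1}, Theorem \ref{modular_cont} and Corollary \ref{cor3}, specialized to the kernel $\chi = \mathcal{F}_n = \prod_{i=1}^n F(\cdot)$, using the favorable parameters recorded just above: $\mathcal{F}_n$ is a multivariate kernel which satisfies $(\chi 2)$ with \emph{every} $\mu > 0$ --- indeed $A_w(\xx) \equiv 1$, since $\sum_{k \in \Z} F(x-k) = 1$ --- satisfies $(\chi 3)$ with \emph{every} $0 < \beta \miu 1$, since the univariate Fej\'er kernel $F$ fulfills the moment condition (\ref{beta}) for all such $\beta$ and this is inherited by the product kernel, satisfies $(\chi 4)$ with $\alpha = 1$, and in addition has $m_\nu(\mathcal{F}_n) < +\infty$ for every $0 < \nu \miu 1$.

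For the uniform estimate, I would apply Theorem \ref{theorem1} to $f \in Lip_\infty(\nu)$: it yields $\|S^{\mathcal{F}_n}_w f - f\|_\infty = \mathcal{O}(w^{-\eps})$ with $\eps = \min\{\nu, \beta, \mu\}$, where $\beta$ and $\mu$ range over the admissible parameters of $(\chi 3)$ and $(\chi 2)$ for $\mathcal{F}_n$. Taking $\beta = 1 \mau \nu$ and any $\mu \mau \nu$ forces $\eps = \nu$, which is the first claim.

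For the modular continuity inequality, I would apply Theorem \ref{modular_cont} with $\chi = \mathcal{F}_n$ and compute the two constants entering that statement. First, $\|\mathcal{F}_n\|_1 = \prod_{i=1}^n \|F\|_1 = 1$, because $F \mau 0$ and $\int_\R F(x)\, dx = \widehat F(0) = 1$; second, using again $F \mau 0$ together with $\sum_{k \in \Z} F(x-k) = 1$, one gets $m_{0,\Pi^n}(\mathcal{F}_n) = \sup_{\uu \in \R^n} \prod_{i=1}^n \sum_{k_i \in \Z} F(u_i - k_i) = 1$. Substituting these values into the estimate of Theorem \ref{modular_cont} gives $I^\varphi[\lambda S^{\mathcal{F}_n}_w f] \miu \delta^{-n} I^\varphi[\lambda f]$ for every $\lambda > 0$ and $w > 0$, and in particular $S^{\mathcal{F}_n}_w f \in L^\varphi(\R^n)$ whenever $f \in L^\varphi(\R^n)$.

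Finally, for the modular rate of approximation I would invoke Corollary \ref{cor3}: since $m_\nu(\mathcal{F}_n) < +\infty$ for the fixed $\nu$ and $f \in L^\varphi(\R^n) \cap Lip_\varphi(\nu)$, that corollary produces $\lambda > 0$ with $I^\varphi[\lambda(S^{\mathcal{F}_n}_w f - f)] = \mathcal{O}(w^{-\eps})$, $\eps = \min\{\nu, \mu, \alpha\}$; since $\mathcal{F}_n$ admits every $\mu > 0$ and $\alpha = 1 \mau \nu$, the minimum collapses to $\eps = \nu$. I do not expect any genuine obstacle: the entire argument amounts to observing that, for the Fej\'er kernel, the parameters $\beta$, $\mu$, $\alpha$ are large enough that the exponent appearing in each of the three quoted results reduces to $\nu$, the only non-formal step being the elementary computation $\|\mathcal{F}_n\|_1 = m_{0,\Pi^n}(\mathcal{F}_n) = 1$.
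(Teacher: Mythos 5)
Your proposal is correct and follows essentially the same route as the paper: the corollary is obtained exactly by specializing Theorem \ref{theorem1}, Theorem \ref{modular_cont} and Corollary \ref{cor3} to the product Fej\'er kernel, whose parameters ($A_w \equiv 1$, hence every $\mu>0$; $(\chi 3)$ for every $0<\beta\miu 1$; $\alpha=1$; $m_{\nu}(\mathcal{F}_n)<+\infty$; $\|\mathcal{F}_n\|_1 = m_{0,\Pi^n}(\mathcal{F}_n)=1$ by positivity of $F$ and the partition-of-unity property) make each exponent collapse to $\nu$. Your explicit parameter bookkeeping matches the computations the paper carries out in Section \ref{applications} before stating the corollary.
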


  The Fej\'{e}r's kernel $\mathcal{F}_n$ provides an example of kernel with unbounded support. In this case, for a practical applications of the above reconstruction formula to any given signal with unbounded duration, one should evaluate the sampling series $S_w f$ at any fixed $\xx \in \R^n$, and this require to know an infinite number of mean values $w^n \int_{R^w_{\kk}}f(\uu)\ d\uu$. Clearly, in order to evaluate the operators $S_w f$ at $\xx$, the infinite sampling series must be truncated to a finite one, and this procedure leads to the so-called truncation error. However, if the signal $f$ has bounded duration, i.e., compact support, this problem does not arise. 

  In order to avoid the truncation error, kernels $\chi$ with compact support can be taken into consideration. 

  Noteworthy examples of such kernels can be constructed using the well-known, univariate central B-spline of order $k \in \N$, defined by
$$
M_k(x) :=\ \frac{1}{(k-1)!}\sum^k_{i=0}(-1)^i \left(\begin{array}{l} \!\! 
k\\
\hskip-0.1cm i
\end{array} \!\! \right)
\left(\frac{k}{2}+x-i\right)^{k-1}_+.
$$
We recall that, $(x)_+ := \max\left\{x,0\right\}$ denotes the positive part of $x \in \R$ (see \cite{BABUSTVI,VIZA1,COVI1}). 
We have that the Fourier transform of $M_k$ is given by
$$
\widehat{M_k}(v)\ :=\ \mbox{sinc}^n\left( \frac{v}{2 \pi} \right), \hskip0.5cm (v \in \R),
$$
and then, if we consider the case of the uniformly spaced sampling scheme, condition (\ref{sing}) is satisfied for every $\mu>0$ by Remark \ref{remark3}. Clearly, $M_k$ are bounded on $\R$, with compact support $[-k/2,k/2]$, and hence $M_k \in L^1(\R)$, for all $k \in \N$. Moreover, it is easy to deduce that conditions $(\ref{beta})$ and $(\ref{kjkjkbbb})$ are fulfilled for every $\beta>0$ and $\alpha >0$.
As in case of Fej\'{e}r's kernel, we
define the multivariate B-spline kernel of order $k \in \N^+$ by
\begin{displaymath}
\mathcal{M}^n_k(\xx)\ :=\ \prod^n_{i=1}M_k(x_i), \hskip1cm \underline{x}=(x_1,...,x_n) \in \R^n.
\end{displaymath}
Then the multivariate sampling Kantorovich operators based on the B-spline kernel of order $k$, in case of the uniform sampling scheme, take now the form
$$
(S^{\mathcal{M}^n_k}_w f)(x)\ =\ \sum_{\kk \in \Z^n}\left[w^n\int_{\Rkw}f(\uu)\ d\uu\right] \mathcal{M}^n_k\left(w\xx-\kk\right), \hskip0.5cm (\xx \in \R^n),
$$
for every $w>0$, where $f:\R^n \rightarrow \R$ is a locally integrable function such that the above series is convergent for every $\xx \in \R^n$. From Theorem \ref{theorem1}, Theorem \ref{modular_cont} and Corollary \ref{cor4}, we obtain the following
\begin{corollary} \label{cor6.2}
Let $f \in Lip_{\infty}(\nu)$, with $0 < \nu \miu 1$. Then
$$
\left\|S^{\mathcal{M}^n_k}_w f-f\right\|_{\infty}\ =\ \mathcal{O}(w^{-\nu}), \hskip0.5cm as \hskip0.5cm w\rightarrow +\infty.
$$
In case of Orlicz spaces, for every $f \in L^{\varphi}(\R^n)$, there holds
$$
I^{\varphi}[\lambda S^{\mathcal{M}^n_k}_w f]\ \miu\ \frac{1}{\delta^n}\ I^{\varphi}[\lambda f], 
$$
for some $\lambda>0$ and for every $w> 0$. In particular, $S^{\mathcal{M}^n_k}_w f \in L^{\varphi}(\R^n)$ whenever $f \in L^{\varphi}(\R^n)$.

\noindent Moreover, for any $f \in L^{\varphi}(\R^n)\cap Lip_{\varphi}(\nu)$, $0 < \nu \miu 1$, there exists $\lambda>0$ such that
$$
I^{\varphi}[\lambda(S^{\mathcal{M}^n_k}_wf-f)]\ =\ \mathcal{O}(w^{-\nu}), \hskip0.5cm as \hskip0.5cm w\to +\infty.
$$
\end{corollary}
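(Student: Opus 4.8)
The plan is to deduce the corollary as a direct specialization of Theorem \ref{theorem1}, Theorem \ref{modular_cont} and Corollary \ref{cor4} to the kernel $\mathcal{M}^n_k$, once it has been checked that $\mathcal{M}^n_k$ is an admissible multivariate kernel and that the parameters $\mu$, $\beta$, $\alpha$ attached to it may be chosen freely.

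First I would verify that $\mathcal{M}^n_k$ is covered by the product-kernel construction of this section. The univariate factor $M_k$ is bounded, supported in $[-k/2,k/2]$, hence lies in $L^1(\R)$; being compactly supported it has finite discrete absolute moment of every order, so $(\ref{beta})$ holds for every $\beta>0$; its Fourier transform $\widehat{M_k}$ vanishes at the nonzero integers and equals $1$ at the origin, so by Remark \ref{remark3} condition $(\ref{sing})$ is satisfied (we are in the uniform scheme $\tk=\kk$); and, again by compact support, $(\ref{kjkjkbbb})$ holds for every $\alpha>0$. Hence $\mathcal{M}^n_k$ satisfies $(\chi 1)$--$(\chi 4)$, and moreover $(\chi 2)$ holds for every $\mu>0$ (indeed $A_w\equiv 1$ by $(\ref{sing})$), $(\chi 3)$ for every $\beta>0$, and $(\chi 4)$ for every $\alpha>0$; finally $\mathcal{M}^n_k$ has compact support $[-k/2,k/2]^n$.

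With this in hand the uniform estimate is immediate: Theorem \ref{theorem1} applied to $f\in Lip_{\infty}(\nu)$ gives $\|S^{\mathcal{M}^n_k}_wf-f\|_{\infty}=\mathcal{O}(w^{-\eps})$ with $\eps=\min\{\nu,\beta,\mu\}$, and since $\beta$ and $\mu$ may be taken $\ge\nu$ the exponent collapses to $\nu$. For the modular continuity bound I would note that $M_k\ge 0$, so $\|M_k\|_1=\widehat{M_k}(0)=1$ and, by $(\ref{sing})$, $m_{0,\Pi^1}(M_k)=\sup_{x}\sum_{k\in\Z}M_k(x-k)=1$; passing to the product gives $\|\mathcal{M}^n_k\|_1=m_{0,\Pi^n}(\mathcal{M}^n_k)=1$, so Theorem \ref{modular_cont} reduces exactly to $I^{\varphi}[\lambda S^{\mathcal{M}^n_k}_wf]\le\delta^{-n}I^{\varphi}[\lambda f]$, whence $S^{\mathcal{M}^n_k}_wf\in L^{\varphi}(\R^n)$ whenever $f\in L^{\varphi}(\R^n)$. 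Lastly, since $\mathcal{M}^n_k$ has compact support, Corollary \ref{cor4} applies to $f\in L^{\varphi}(\R^n)\cap Lip_{\varphi}(\nu)$ and gives $I^{\varphi}[\lambda(S^{\mathcal{M}^n_k}_wf-f)]=\mathcal{O}(w^{-\eps})$ with $\eps=\min\{\nu,\mu\}=\nu$, again because $\mu$ is arbitrary.

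There is no real obstacle here; the work is pure bookkeeping. The only points needing a little care are checking that each of $\mu$, $\beta$, $\alpha$ attached to $\mathcal{M}^n_k$ can be taken large enough for the minima defining $\eps$ to reduce to $\nu$, and computing the normalizing constants $\|\mathcal{M}^n_k\|_1=m_{0,\Pi^n}(\mathcal{M}^n_k)=1$, both consequences of the nonnegativity and the partition-of-unity property of $M_k$. One could equally well invoke Corollary \ref{cor3} in place of Corollary \ref{cor4}, since $m_{\nu}(\mathcal{M}^n_k)<+\infty$ trivially for a compactly supported kernel.
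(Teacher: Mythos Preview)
Your proposal is correct and follows precisely the route the paper itself indicates: the corollary is obtained by specializing Theorem \ref{theorem1}, Theorem \ref{modular_cont} and Corollary \ref{cor4} to $\mathcal{M}^n_k$, after verifying via the product construction that this kernel satisfies $(\chi 1)$--$(\chi 4)$ with $\mu,\beta,\alpha$ arbitrary. Your additional justification that $\|\mathcal{M}^n_k\|_1=m_{0,\Pi^n}(\mathcal{M}^n_k)=1$ from nonnegativity and the partition-of-unity property is exactly what is needed to reduce the constant in Theorem \ref{modular_cont} to $\delta^{-n}$, and the paper leaves this implicit here (having spelled it out only for the Fej\'er case).
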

\noindent For others useful examples of kernels see, e.g., \cite{BABUSTVI,BAMUVI,BUNE,VI1,CLCOMIVI1,COSP7,COSP8,COSP3,CO1,CO2,COSP10}.

\section*{{\normalsize Acknowledgment}}

{\normalsize The authors are members of the Gruppo  
Nazionale per l'Analisi Matematica, la Probabilit\'a e le loro  
Applicazioni (GNAMPA) of the Italian Istituto Nazionale di Alta Matematica (INdAM).
}

%%%%%%%%%%%%%%%%%%%%%%%%%

\end{document}